\documentclass{article}

\usepackage[a4paper, total={6in, 8in}]{geometry}

\usepackage{amssymb}
\usepackage{amsmath}
\usepackage{amsthm}
\usepackage[english]{babel}
\usepackage{url}
\usepackage{stmaryrd}
\usepackage{bbm}
\usepackage{subcaption}
\usepackage{mathtools}

\usepackage{enumitem}
\usepackage[]{hyperref}

\usepackage[capitalise]{cleveref}

\theoremstyle{definition}
\newtheorem{definition}{Definition}[section]
\newtheorem{example}{Example}[section]

\theoremstyle{plain}
\newtheorem{theorem}{Theorem}[section]
\newtheorem{proposition}[theorem]{Proposition}
\newtheorem{lemma}[theorem]{Lemma}
\newtheorem{corollary}[theorem]{Corollary}

\newtheorem*{conjecture*}{Conjecture}

\newtheorem{maintheorem}{Theorem}

\theoremstyle{remark}
\newtheorem{remark}{Remark}[section]
\newtheorem{question}{Question}[section]

\newcommand{\N}{\ensuremath{\mathbb{N}}}
\newcommand{\Z}{\ensuremath{\mathbb{Z}}}

\newcommand{\FF}{\ensuremath{\mathcal{F}}}

\newcommand{\PP}{\ensuremath{\mathcal{P}}}

\let\myequiv\equiv
\renewcommand{\equiv}[1][]{\ensuremath{\stackrel{#1}{\myequiv}}}

\renewcommand{\phi}{\varphi}

\newcommand{\abs}[1]{\left | #1 \right |}

\newcommand{\ind}{\ensuremath{ \mathbbm{1}}}

\newcommand{\W}{\ensuremath{\mathrm{W}}}
\newcommand{\B}{\ensuremath{\mathrm{B}}}
\newcommand{\HJ}{\ensuremath{\mathrm{HJ}}}

\newcommand{\FS}[2][]{\ensuremath{\mathrm{FS}^{#1}\left(#2\right)}}

\newcommand{\seq}[2]{\ensuremath{\left ( {#1}_{#2}\right )_{#2 = 1}^\infty }}

\title{On the partition regularity of arithmetic progressions and linear equations in k-IP-sets}
\author{Raphaël Giordano \\ \href{raphael.giordano@alumni.epfl.ch}{raphael.giordano@alumni.epfl.ch}}
\date{September 2025}

\begin{document}

\maketitle

\begin{abstract}
    In this paper, we provide versions of Van der Waerden's theorem and Rado's theorem for finite colorings of IP-sets and $k$-IP-sets. Here, by an IP-set we mean a set of integers that contains all finite sums of an infinite subset of $\N$, and we define $k$-IP-sets similarly to IP-sets but allowing each summand to appear with multiplicity bounded by $k-1$. 
\end{abstract}

\tableofcontents

\section{Introduction}
    In 1927, Bartel Leendert van der Waerden demonstrated the existence of monochromatic arithmetic progressions of arbitrary length in $\N$ no matter how the latter is colored with finitely many colors.
    
   In 1933, Richard Rado provided a far-reaching generalization of Van der Waerden's theorem by classifying all systems of linear equations with integer coefficients that admit monochromatic solutions for all finite colorings of $\N$.
    
    The goal of this paper is to prove similar results in specific families of subsets of $\N$, namely IP-sets and $k$-IP-sets.\\

    \textbf{Notation:} Henceforth, let $\N := \{1,2,\ldots\}$
    and let $\llbracket a,b \rrparenthesis := [a,b)\cap \Z$.

    \begin{definition}
        Let $k \geq 2$ be a natural number. A \emph{$k$-IP-set} is a subset of $\N$ which contains a set of the form
        $$\FS[k]{\seq{x}{i}} := \left \{\sum_{i=1}^n \epsilon_i x_i : n\in \N, \epsilon_i \in \llbracket 0,k \rrparenthesis\right \}\setminus \{0\}$$
        for an increasing sequence $\seq{x}{i}$. If $k=2$, we simply call it an \emph{IP-set} and write $\mathrm{FS}$ instead of $\mathrm{FS}^2$.
    \end{definition}
    
    For convenience, all sequences are implicitly assumed to be increasing throughout the article. Sometimes, we simply write “equations" when we mean “linear equations with integer coefficients".

    \begin{definition}
        A \emph{coloring} of a set $X$ is a function $c:X \to C$ where $C$ is the set of colors. We can think of a coloring as a partition of $X$.
    
        A coloring is called \emph{finite} if $C$ is finite.
    
        A subset of $X$ is called \emph{monochromatic} if all its elements have the same color.
    \end{definition}

\subsection{Results of the paper}

Now that $k$-IP-sets are defined, our goal is to see how Van der Waerden's theorem and Rado's theorem generalize.
\begin{question}\label{questionVDW_IP}
    Can we find monochromatic arithmetic progressions in $k$-IP-sets? How does the length of the progression relate to the number $k$?
\end{question}
\begin{question}\label{questionRadoIPfixk}
    Given $k \geq 2$, which equations admit monochromatic solutions in every $k$-IP-set?
\end{question}
\begin{question}\label{questionRadoIPfixeq}
    Given an equation, for which value of $k$ does the equation admit a monochromatic solution in every $k$-IP-set?
\end{question}

We answer \cref{questionVDW_IP} with an application of the Hales-Jewett theorem to show the existence of monochromatic arithmetic progressions of length $k$ in every $k$-IP-set. This yields a version of Van der Waerden's theorem on $k$-IP-sets which turns out to be equivalent to the Hales-Jewett theorem.
\begin{maintheorem}\label{INTRO_HJ_equiv_VDWforIP}
    The following statements are equivalent:
    \begin{enumerate}
        \item the Hales-Jewett theorem.
        \item For every $k \geq 2$ and for every finite coloring of any $k$-IP-set, there exists a monochromatic arithmetic progression of length $k$.
    \end{enumerate}
\end{maintheorem}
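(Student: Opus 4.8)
The plan is to prove both implications through a single bridge: the evaluation map sending a word $w=(w_1,\dots,w_N)$ over the alphabet $A=\llbracket 0,k \rrparenthesis=\{0,\dots,k-1\}$ (whose size is exactly the target length $k$) to the integer $\phi(w)=\sum_{i=1}^{N} w_i x_i$, where $\seq{x}{i}$ generates the $k$-IP-set under consideration. The point is that combinatorial lines over $A$ and arithmetic progressions of length $k$ are, up to this map, the same objects.

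For $(1)\Rightarrow(2)$ — the clean, substantive direction — I would argue as follows. Given a $k$-IP-set containing $\FS[k]{\seq{x}{i}}$ and an $r$-coloring $c$, set $N=\HJ(k,r+1)$ (alphabet of size $k$, with $r+1$ colors) and color $A^N$ by $c'(w)=c(\phi(w))$ for $w\neq\mathbf 0$, assigning $\mathbf 0$ a fresh $(r+1)$-st color. The Hales--Jewett theorem supplies a $c'$-monochromatic combinatorial line with wildcard set $S\neq\emptyset$ and fixed contribution $F=\sum_{i\notin S}w_i x_i$; its points evaluate to
\[
\phi(\tau[a]) = F + aD,\qquad D:=\sum_{i\in S}x_i>0,\quad a\in\{0,\dots,k-1\},
\]
an arithmetic progression of length $k$ with positive common difference $D$. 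Since $\tau[1],\dots,\tau[k-1]$ are nonzero, the line cannot carry the fresh color, so it lies in an original color and \emph{all} $k$ terms are nonzero, hence genuinely in the $k$-IP-set and monochromatic.

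For $(2)\Rightarrow(1)$ I would argue the contrapositive. If Hales--Jewett fails for some $(t,r)$, then for every $N$ there is an $r$-coloring of $\{0,\dots,t-1\}^N$ with no monochromatic combinatorial line; a compactness argument (K\"onig's lemma on the tree of colorings under restriction) packages these into one $r$-coloring $\chi$ of all finite words such that combinatorial lines at every level remain non-monochromatic. Put $k=t$ and choose a rapidly growing sequence $\seq{x}{i}$ so that $\phi$ restricts to a bijection between finite words over $A$ and $\FS[k]{\seq{x}{i}}$ by unique base-like representation. Transport $\chi$ to a finite coloring $C(m)=(\chi(w(m)),\psi(m))$ of the $k$-IP-set, where $w(m)$ is the digit word of $m$ and $\psi$ is an auxiliary finite coloring; the goal is to show $C$ has no monochromatic length-$k$ progression, contradicting $(2)$.

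The main obstacle lies entirely here: length-$k$ progressions in a $k$-IP-set are strictly more general than images of combinatorial lines, and two defects must be tamed. First, \emph{carries}: I expect that enforcing gaps $x_{i+1}\gg(k-1)(x_1+\cdots+x_i)$ makes each digit of $m_j=m_0+jd$ affine in $j$, since any overflow lands in a representation gap and hence outside the $k$-IP-set; then every position is constant, strictly increasing (value $0\to k-1$), or strictly decreasing. Second, and this is the genuine crux, \emph{reversed and mixed progressions}: an all-increasing progression (or, after reindexing $j\mapsto k-1-j$, an all-decreasing one) is exactly a combinatorial line and is already broken by $\chi$, but a \emph{mixed} progression, with some positions increasing and others decreasing, is a bona fide length-$k$ progression that is not a combinatorial line, so $\chi$ alone need not break it. The whole reverse direction therefore hinges on designing $\psi$ so that no mixed progression is $\psi$-monochromatic; I expect this to be the hardest and most delicate step, because simple invariants (the top digit, a digit sum modulo a fixed number, a single linear functional of the digits) can all be defeated by suitably balanced mixed progressions, so $\psi$ must capture enough of the orientation of the representation while still using only finitely many colors.
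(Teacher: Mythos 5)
Your proof of $(1)\Rightarrow(2)$ is correct and is essentially the paper's argument (your fresh color for the zero word handles a detail the paper glosses over). The substance is in $(2)\Rightarrow(1)$, and there your diagnosis of the crux is exactly right --- in fact more right than you could know: the ``mixed progressions'' you single out are not a removable nuisance but a genuine feature, and they refute the lemma on which the paper's own proof rests. Concretely, for \emph{any} $x_1<x_2$, however sparse the sequence, the set $\{2x_1,\; x_1+x_2,\; 2x_2\}\subseteq \FS[3]{\seq{x}{i}}$ is a $3$-term arithmetic progression with common difference $x_2-x_1$, and its digit words $(2,0),(1,1),(0,2)$ form an anti-diagonal, not a combinatorial line. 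Thus \cref{combinatorialAP} of the paper is false as stated; its induction breaks exactly where you point: when the progression meets several blocks $A_j$, the paper infers $d\geq x_{n+1}$, whereas here $d=x_2-x_1<x_2$, the reduced progression $a+j(d-x_{n+1})$ is \emph{decreasing}, and the proposed reconstruction (append a $*$ to the old parametric word) produces the diagonal line rather than the given progression, because the old wildcards and the new coordinate move in opposite directions. What sparseness really buys is the structure theorem your ``carry'' analysis predicts: in a sparse $\FS[k]{\seq{x}{i}}$ every $k$-term progression is a \emph{generalized line}, each digit being constant, equal to $j$, or equal to $k-1-j$ along the progression; this is provable by the paper's block decomposition if one tracks both orientations.

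However, the repair you propose cannot be carried out: the auxiliary coloring $\psi$ you seek does not exist, because mixed progressions are themselves partition regular. Indeed, given any finite coloring $\chi$ of $\llbracket 0,k \rrparenthesis^{2n}$, color $\llbracket 0,k \rrparenthesis^{n}$ by $\chi'(w)=\chi(w\overline{w})$, where $\overline{w}$ is the digit-wise reflection $\epsilon\mapsto k-1-\epsilon$ and $w\overline{w}$ is concatenation; for $n$ large, the Hales--Jewett theorem yields a $\chi'$-monochromatic line with wildcard set $S$, and the set $\{\,v(j)\overline{v(j)}: j \in \llbracket 0,k \rrparenthesis\,\}$ is then a $\chi$-monochromatic \emph{mixed} generalized line (digits increase on $S$ and decrease on the mirrored copy of $S$). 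So every finite coloring admits monochromatic mixed progressions, and no choice of $\psi$ with finitely many colors can break them all. The correct move is the opposite of yours: make mixed progressions harmless rather than forbidden. To prove Hales--Jewett for the alphabet $\llbracket 0,t \rrparenthesis$, invoke statement $(2)$ with $k=2t$, and pull the given coloring $c$ back along the digit-wise application of the reflection-symmetric surjection $g\colon \llbracket 0,2t \rrparenthesis \to \llbracket 0,t \rrparenthesis$, $g(\epsilon)=\min(\epsilon,\,2t-1-\epsilon)$. Since $g(\epsilon)=g(2t-1-\epsilon)$, up-moving and down-moving digits become indistinguishable after applying $g$: any generalized line over $\llbracket 0,2t \rrparenthesis$, pure or mixed, projects digit-wise onto the \emph{full} point set of an honest combinatorial line over $\llbracket 0,t \rrparenthesis$ with wildcard set $S_+\cup S_-$, surjectivity of $g$ making the parameter sweep the whole alphabet as $j$ runs over $\llbracket 0,2t \rrparenthesis$. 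A monochromatic $2t$-term progression in a sparse $2t$-IP-set therefore yields a $c$-monochromatic combinatorial line, and compactness recovers the finitary statement. This closes the gap in your argument, and it is also the needed repair of the paper's proof.
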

The proof idea for this theorem is to notice the link between a combinatorial line (see \cref{parametricword}) in the context of the Hales-Jewett theorem and an arithmetic progression.

The size of the arithmetic progression matches exactly the “dimension" of the $k$-IP-set. In fact, we can construct $k$-IP-sets which do not contain arithmetic progression longer than $k$ (see \cref{boundedlengthAPinkIPset}).\\

Our next theorem provides an answer to \cref{questionRadoIPfixk} for the case $k=2$.
\begin{maintheorem}\label{INTRO_classification_regular_systems_IP}
    Let $A \in \Z^{l\times n}$ with columns $\Vec{a_1}, \ldots, \Vec{a_n} \in \Z^l$. The following statements are equivalent:
    \begin{enumerate}
        \item For every finite coloring of any IP-set, there exists a monochromatic solution to the system of equations in matrix form $A\Vec{x} = \Vec{0}$, where $\Vec{x}= (x_1,\ldots,x_n)$.
        \item There exists $m\in \N$ and $I_1, \ldots, I_m \subseteq \{1,\ldots,n\}$ such that 
        $$\bigcup_{j=1}^m I_j = \{1,\ldots,n\} \text{ and } \forall j \in \{1,\ldots,m\} \; \sum_{i \in I_j} \Vec{a_i} = \Vec{0}.$$
    \end{enumerate}

    Furthermore, we can ask the solution to be distinct in statement 1 if and only if we add the condition
    $$\forall i \neq i' \in \{1,\ldots,n\}, \exists j \in \{1,\ldots,m\} : (i \in I_j\text{ and } i' \notin I_j) \text{ or } (i \notin I_j\text{ and } i' \in I_j)$$
    in statement 2.
\end{maintheorem}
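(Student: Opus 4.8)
The plan is to prove the two implications separately, using Hindman's finite‑sums theorem for the direction $(2)\Rightarrow(1)$ and a rigid, super‑increasing IP‑set for $(1)\Rightarrow(2)$; the distinctness addendum then falls out of the very same two constructions. The guiding idea is that Hindman's theorem lets us replace an arbitrary finite coloring of an arbitrary IP‑set by a single \emph{monochromatic} sub‑IP‑set, reducing statement 1 to a purely structural question, while the super‑increasing IP‑set conversely forces any solution to have a rigid support, from which the combinatorial data $I_1,\dots,I_m$ can be read off directly.

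For $(2)\Rightarrow(1)$, let $S\supseteq\FS{\seq{w}{i}}$ be an arbitrary IP‑set equipped with a finite coloring $c$. By Hindman's theorem in its sum‑subsystem form, there is a sequence $\seq{y}{j}$, each $y_j$ being a sum of the $w$'s over pairwise disjoint finite blocks, such that $\FS{\seq{y}{j}}\subseteq\FS{\seq{w}{i}}\subseteq S$ is monochromatic. Given the sets $I_1,\dots,I_m$ from statement 2, I would fix distinct indices $k_1<\cdots<k_m$ and set $v_i:=\sum_{j:\,i\in I_j}y_{k_j}$ for $i\in\{1,\dots,n\}$. Since the $k_j$ are distinct, each $y$ occurs with coefficient in $\{0,1\}$, so every $v_i$ lies in $\FS{\seq{y}{j}}$ and all the $v_i$ share a single color; the covering condition $\bigcup_j I_j=\{1,\dots,n\}$ ensures each $v_i$ is a nonempty sum, hence $v_i\neq 0$. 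Finally $\sum_i v_i\vec{a_i}=\sum_j y_{k_j}\sum_{i\in I_j}\vec{a_i}=\vec 0$ by the zero‑sum hypothesis, so $\vec v$ is a monochromatic solution.

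For $(1)\Rightarrow(2)$, set $M:=\sum_{i=1}^n\|\vec{a_i}\|_\infty$ and $w_l:=(M+1)^l$, and apply statement 1 to the \emph{trivial} one‑color coloring of $\FS{\seq{w}{l}}$: this already yields a solution $\vec v$ with every $v_i\in\FS{\seq{w}{l}}$. The super‑increasing growth makes the $0/1$‑representation unique, so $v_i=\sum_{l\in F_i}w_l$ for a well‑defined nonempty support $F_i$, and substitution gives $\sum_l w_l b_l=\vec 0$ with $b_l:=\sum_{i:\,l\in F_i}\vec{a_i}$. I would then argue that if some $b_l\neq\vec 0$, choosing the largest such $l$ and comparing its dominant contribution $w_l b_l$ (of sup‑norm at least $w_l$) against the tail (of sup‑norm at most $M\sum_{l'<l}w_{l'}<w_l$) forces a contradiction; hence every $b_l=\vec 0$. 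Letting $I_1,\dots,I_m$ enumerate the distinct sets $\{i:l\in F_i\}$ as $l$ ranges over $\bigcup_i F_i$ then produces zero‑sum blocks whose union is $\{1,\dots,n\}$, because each $F_i\neq\emptyset$.

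The distinctness addendum reuses both constructions. If statement 2 holds together with the separation condition, then in the forward construction the index sets $J_i:=\{j:i\in I_j\}$ are pairwise distinct, and after refining $\seq{y}{j}$ to a super‑increasing subsequence distinct $J_i$ yield distinct sums, so the $v_i$ are distinct. Conversely, a distinct solution in $\FS{\seq{w}{l}}$ forces the supports $F_i$ to be pairwise distinct by uniqueness of representation, and for $i\neq i'$ any $l\in F_i\triangle F_{i'}$ witnesses the separation condition for the corresponding block. The main obstacle I expect is the bookkeeping in the forward direction: guaranteeing that the constructed values genuinely live in the prescribed IP‑set (all coefficients staying in $\{0,1\}$) while simultaneously matching distinctness of the $v_i$ to the combinatorial separation condition; securing this hinges on invoking Hindman's theorem in precisely the relative, sum‑subsystem form that keeps the monochromatic copy inside the given IP‑set.
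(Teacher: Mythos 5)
Your proposal is correct and follows essentially the same route as the paper: for $(2)\Rightarrow(1)$ you invoke Hindman's theorem in sum-subsystem form (the paper's \cref{IPpartregIP} and \cref{neat_fact}) and then use the same indicator-sum construction $v_i=\sum_{j:\,i\in I_j}y_{k_j}$, and for $(1)\Rightarrow(2)$ your super-increasing sequence $w_l=(M+1)^l$ with the dominant-term comparison is just an inline proof of the paper's base-$B$ uniqueness lemma (\cref{uniqueness B negative}). The distinctness addendum is also handled identically, by matching distinctness of sums with distinctness of the supports $\{j: i\in I_j\}$ under sufficient sparseness, as in \cref{extra_distinct_solution}.
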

In \cref{check_criterion_columns_condition}, we show that statement 2 is stronger than the columns conditions in Rado's theorem (see \cref{columns_condition} and \cref{RadosTheorem}).

The proof of this theorem is a consequence of a corollary of Hindman's theorem. This corollary states that for every finite coloring of an IP-set, there exists a monochromatic subset which is again an IP-set.

This proof idea cannot be used for the cases $k>2$ because the corollary does not extend. There exist finite colorings of $k$-IP-sets such that we cannot find a monochromatic $k$-IP-subset. However, using an extended version of the Hales-Jewett theorem, we can always find “shifted and truncated" $k$-IP-sets which are monochromatic (see \cref{shift IPk0}). This weaker result led us to studying shift-invariant equations.

\begin{definition}
    An equation with $n$ variables on $\N$ is \textit{shift-invariant} if for every $m \in \N$ and for every $(x_1, \ldots, x_n)\in \N^n$,
    $(x_1 + m, \ldots, x_n + m)$ is a solution if and only if $(x_1, \ldots, x_n)$ is a solution.
\end{definition}

\begin{maintheorem}\label{INTRO_suff_signature_kIP}
    Consider a shift-invariant equation
     \begin{equation}\label{eqthmC}
        a_1 x_1 + \cdots + a_r x_r = a_{r+1} x_{r+1} + \cdots + a_n x_n
    \end{equation}
     with $a_i \in \N$ such that after reordering $a_1 \leq \cdots \leq a_r$ and $a_{r+1} \leq \cdots \leq a_n$.
    Consider the following statements:
    \begin{enumerate}
        \item $k > \min \left \{ \max\{a_1 + a_r, \; a_n\},\; \max\{a_r,\; a_{r+1} + a_n\} \right \}$,
        \item For every finite coloring of any $k$-IP-set, there exists a distinct solution to the equation 
        $$a_1 x_1 + \cdots + a_r x_r = a_{r+1} x_{r+1} + \cdots + a_n x_n.$$
    \end{enumerate}

    Then statement 1 implies statement 2. The converse is true for an equation with three variables, i.e. for an equation of the form 
    $$a_1 x_1 + a_2 x_2 = a_3 x_3$$
    under the assumption that $\gcd(a_1, a_2) = 1$.
\end{maintheorem}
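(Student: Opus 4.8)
The plan begins by recording that shift-invariance of \eqref{eqthmC} is equivalent to the balance condition $\sum_{i=1}^r a_i=\sum_{i=r+1}^n a_i=:S$, since shifting every variable by $m$ changes the two sides by $mS$ on the left and $mS$ on the right. Fixing a $k$-IP-set and a finite coloring, I would first pass to a subsequence of its generators so that they increase rapidly enough that every element of the resulting sub-$k$-IP-set has a unique representation with digits in $\llbracket 0,k\rrparenthesis$. Applying the lemma that produces monochromatic shifted-and-truncated $k$-IP-sets, I obtain a monochromatic set $M=\{\,c_0+\sum_{j=1}^e \lambda_j\delta_j:\lambda\in\llbracket 0,k\rrparenthesis^e\,\}$, where $c_0\ge 0$, the $\delta_j>0$ are block-sums of the (rapidly increasing) generators, and $e$ may be taken as large as I wish. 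I then seek the solution inside $M$: to each variable $x_i$ I assign a digit-vector $v^{(i)}\in\llbracket 0,k\rrparenthesis^e$ and set $x_i:=c_0+\sum_j v^{(i)}_j\delta_j\in M$. All $x_i$ share a color, and by unique representation they are pairwise distinct as soon as the $v^{(i)}$ are. Substituting into \eqref{eqthmC} and using the balance condition, the contributions of $c_0$ cancel ($c_0S$ on each side), so it remains to choose pairwise distinct $v^{(i)}$ with $\sum_{i\le r}a_i v^{(i)}=\sum_{i>r}a_i v^{(i)}$ in $\Z^e$, and it suffices to achieve this equality coordinate by coordinate.

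\textbf{The combinatorial core (statement 1 $\Rightarrow$ 2).} Write $B_1=\max\{a_1+a_r,a_n\}$ and $B_2=\max\{a_r,a_{r+1}+a_n\}$, and call $p\in\llbracket 0,k\rrparenthesis^n$ a \emph{balanced pattern} if $\sum_{i\le r}a_i p_i=\sum_{i>r}a_i p_i$. Using a family of balanced patterns as the coordinates $j=1,\dots,e$ yields vectors $v^{(i)}$ that are pairwise distinct exactly when the family \emph{separates} the index set, i.e.\ for every $i\neq i'$ some pattern differs at $i$ and $i'$. For a left index $i\le r$ and a right index $m>r$ I use the \emph{transposition pattern} $C_{i,m}$ equal to $a_m$ at $i$, to $a_i$ at $m$, and $0$ elsewhere: it is balanced because each side equals $a_i a_m$, and its entries are at most $\max\{a_r,a_n\}$. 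Since statement 1 forces $k>\min\{B_1,B_2\}\ge\max\{a_r,a_n\}$ (each of $B_1,B_2$ dominates both $a_r$ and $a_n$), every such pattern is a legitimate digit-vector; in fact this already shows the weaker hypothesis $k>\max\{a_r,a_n\}$ suffices. It remains to check separation: two left (resp.\ two right) indices are separated by any $C_{i,m}$ meeting exactly one of them, while a left index $i$ and right index $m$ are separated by $C_{i,m}$ when $a_i\neq a_m$, and otherwise by a pattern $C_{i,m'}$ or $C_{i',m}$, which exists because for $n\ge 3$ one side has at least two variables. Taking $e$ equal to the number of patterns used and ensuring every variable is met by some pattern (so each $x_i>0$), I obtain the desired distinct monochromatic solution.

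\textbf{The converse for three variables.} Assume $\gcd(a_1,a_2)=1$ and that statement 1 fails, i.e.\ $k\le a_3=a_1+a_2=\min\{B_1,B_2\}$; I would exhibit a $k$-IP-set with \emph{no} distinct solution at all, so that even the one-color coloring defeats statement 2. Choose generators $(x_i)$ increasing so fast that in any relation $a_1X+a_2Y=a_3Z$ with $X,Y,Z\in\FS[k]{(x_i)}$ the bounded coefficients $a_1\epsilon^X_i+a_2\epsilon^Y_i$ and $a_3\epsilon^Z_i$, all lying in $\llbracket 0,a_3(k-1)\rrbracket$, must agree generator by generator. Thus for each $i$ the digits satisfy $a_1 s+a_2 t=a_3 u$ with $s,t,u\in\llbracket 0,k\rrparenthesis$. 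Reducing modulo $a_3$ and using $a_2\equiv -a_1$ gives $a_1(s-t)\equiv 0\pmod{a_3}$; since $\gcd(a_1,a_3)=\gcd(a_1,a_2)=1$ this forces $s\equiv t\pmod{a_3}$, and as $0\le s,t\le k-1\le a_3-1$ it forces $s=t$, whence $u=s$ as well. Therefore $\epsilon^X=\epsilon^Y=\epsilon^Z$ and $X=Y=Z$: only trivial solutions exist.

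\textbf{Main obstacle.} I expect the delicate step to be the separation argument: confirming that balanced patterns whose entries are bounded by the coefficients themselves can distinguish all $n$ variables, including the equal-coefficient pairs and the cases where a side has a single variable, together with the pre-thinning that upgrades distinctness of digit-vectors to distinctness of the resulting integers. The converse, by contrast, rests squarely on the coprimality hypothesis, which is precisely what collapses the per-generator equation to $s=t=u$ once $k\le a_3$; without $\gcd(a_1,a_2)=1$ the modular step fails and nontrivial per-coordinate solutions reappear.
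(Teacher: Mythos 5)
Your proof is correct, and although it shares the paper's overall skeleton --- use \cref{shift IPk0} together with pre-thinning (\cref{remark_good_sum}) and shift-invariance to reduce to a digit-level problem in sets $\FS[k]{(\delta_j)_{j=1}^e}$ with distinct sums (the paper packages this reduction as \cref{pseudo_neat_fact}), then exhibit explicit digit vectors, and prove the three-variable converse by a sparse base-$B$ analysis hinging on coprimality --- both of your key steps differ genuinely from the paper's. For sufficiency, the paper (\cref{sufficient_bound_shift_inv}) uses the vectors $\Vec{\epsilon_j}$ whose entries are $a_j$ everywhere and $a_1+a_j$ in position $j$ (plus the symmetric right-hand family), and it is exactly these entries that produce the bound $\min\left\{\max\{a_1+a_r,a_n\},\;\max\{a_r,a_{r+1}+a_n\}\right\}$; your transposition patterns $C_{i,m}$ have entries bounded by $\max\{a_r,a_n\}$, and your separation check is complete (same-side pairs are split by any pattern meeting exactly one of them, and an equal-coefficient pair $i\le r<m$ is split by $C_{i,m'}$ or $C_{i',m}$, which exists since $n\ge 3$). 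You therefore prove the strictly stronger sufficient condition $k>\max\{a_r,a_n\}$ --- for instance, for $2x_1+3x_2=2x_3+3x_4$ the paper's bound demands $k\ge 6$ while yours gives $k\ge 4$ --- and Statement 1 of the theorem follows since, as you note, $\min\{B_1,B_2\}\ge\max\{a_r,a_n\}$ in your notation. For the converse, the paper (\cref{shift_invariant_basecase}) assumes a distinct solution in $\FS[k]{(B^i)_{i=1}^n}$ with $n$ minimal, normalizes by a shift, and runs a case analysis on the leading digits; you instead prove the contrapositive: when $k\le a_3$, a sufficiently sparse $k$-IP-set contains no distinct solution whatsoever, because each coordinate equation $a_1s+a_2t=a_3u$ with $s,t,u\in\llbracket 0,k\rrparenthesis$ collapses to $s=t=u$ after reducing modulo $a_3$ and using $\gcd(a_1,a_3)=\gcd(a_1,a_2)=1$. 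This avoids the minimality argument entirely and makes the digit-wise obstruction transparent; both versions rest on the same sparseness device (\cref{uniqueness B negative}). The only point to tighten is cosmetic: the monochromatic set produced by \cref{shift IPk0} omits the element corresponding to $\lambda=0$ (since $\FS[k]{(y_j)_{j=1}^t}$ excludes $0$), which is harmless in your argument because you already insist that every digit vector $v^{(i)}$ be nonzero.
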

\begin{remark}
    It follows from \cref{shift_invariance_condition} that an equation as in (\ref{eqthmC}) is shift-invariant if and only if 
    $$a_1 + \cdots + a_r = a_{r+1} + \cdots + a_n = :  \sigma.$$
    
    Noticing that $\sigma > \min \left \{ \max\{a_1 + a_r, \; a_n\},\; \max\{a_r,\; a_{r+1} + a_n\} \right \}$, we get a weaker yet more elegant version of \cref{INTRO_suff_signature_kIP} by replacing statement 1 by “$k > \sigma$".\\
\end{remark}

We could not provide a new result to answer \cref{questionRadoIPfixeq}. However, we make a couple of observations.

We first notice that modern proofs of Rado's theorem show that a monochromatic solution to a system of equations whose matrix satisfies the columns condition can be found in $(m,p,c)$-sets. These sets were introduced by Walter Deuber in \cite{Deu73}. He proved that for every $m,p,c,r \in \N$ there exist $M,P,C \in \N$ such that for every coloring of a $(M,P,C)$-set with $r$ colors, there exists a monochromatic $(m,p,c)$-subset. This result is known as Deuber's theorem.

Furthermore, we observed that for every $m,p,c \in \N$, every $k$-IP-set contains a $(m,p,c)$-set as long as $k$ is large enough.

This motivates us to consider the definition of $\overline{IP}$-sets which were first introduced by Vitaly Bergelson and Imre Z. Ruzsa in \cite{BerRuz09}. 
\begin{definition}
    An \emph{$\overline{IP}$-set} is a subset of $\N$ which contains a set of the form
    $$\left \{ \sum_{i=1}^N \epsilon_i x_i: N \in \N, \epsilon_i \in \llbracket 0,i \rrparenthesis \right\}$$
   for an increasing sequence $\seq{x}{i} \subseteq \N$.
\end{definition}

Notice that an $\overline{IP}$-set contains $k$-IP-sets for any value of $k$. The consequence of this definition and our observations is straightforward.
\begin{proposition}\label{INTRO_consolation}
    Consider a system of linear equations with integers coefficients. The following statements are equivalent:
    \begin{enumerate}
        \item For every finite coloring of $\N$, the system has a monochromatic solution.
        \item For every finite coloring of any $\overline{IP}$-set, the system has a monochromatic solution.
    \end{enumerate}
\end{proposition}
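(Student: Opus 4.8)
The plan is to prove the two implications separately, with all of the real work concentrated in $(1)\Rightarrow(2)$.

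The implication $(2)\Rightarrow(1)$ is immediate. Fix any $\overline{IP}$-set $S$; by definition $S\subseteq\N$. Given a finite coloring $c$ of $\N$, its restriction $c|_S$ is a finite coloring of $S$, so statement $2$ provides a monochromatic solution of the system lying in $S\subseteq\N$. Since $c|_S$ agrees with $c$ on $S$, this is a monochromatic solution for $c$, which is exactly what statement $1$ asserts. Note that this direction only uses the existence of at least one $\overline{IP}$-set and avoids having to decide whether $\N$ itself is an $\overline{IP}$-set.

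For $(1)\Rightarrow(2)$ I would assemble the ingredients recalled before the statement, being careful about the order in which the constants are chosen. First, by Rado's theorem (\cref{RadosTheorem}), statement $1$ is equivalent to the matrix of the system satisfying the columns condition (\cref{columns_condition}). The modern proof of Rado's theorem then upgrades this to a finitary form: there exist $m,p,c\in\N$ such that every $(m,p,c)$-set already contains a solution of the system. Now fix an arbitrary $\overline{IP}$-set $S$ together with a coloring of it using $r$ colors. Applying Deuber's theorem to the parameters $m,p,c,r$ yields $M,P,C\in\N$ with the property that every $r$-coloring of any $(M,P,C)$-set admits a monochromatic $(m,p,c)$-subset. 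It then remains to locate an $(M,P,C)$-set inside $S$: since $S$ is an $\overline{IP}$-set it contains a $k$-IP-set for every $k$, and since $M,P,C$ are now fixed, choosing $k$ large enough guarantees that this $k$-IP-set---hence $S$---contains an $(M,P,C)$-set $T$. Restricting the given $r$-coloring of $S$ to $T$ and applying Deuber's theorem produces a monochromatic $(m,p,c)$-subset $U\subseteq T\subseteq S$; by the choice of $m,p,c$, the set $U$ contains a solution of the system, which is monochromatic and lies in $S$, establishing statement $2$.

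The step I expect to be the main obstacle is the passage from ``the columns condition holds'' to ``there exist $m,p,c$ such that every $(m,p,c)$-set contains a solution'': this is the precise finitary interface between Rado's theorem and Deuber's theorem, and it is what makes the whole chain go through. Everything after it is bookkeeping, but the quantifier order must be respected: $r$ is given, then $m,p,c$ come from the columns condition, then $M,P,C=M,P,C(m,p,c,r)$ come from Deuber's theorem, and only then is $k=k(M,P,C)$ chosen so as to embed an $(M,P,C)$-set into the $\overline{IP}$-set.
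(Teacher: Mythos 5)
Your proof is correct and follows essentially the same route as the paper: the paper presents this proposition as a ``straightforward consequence'' of exactly the chain you assemble---Rado's theorem giving the columns condition, the finitary fact that such systems have solutions in $(m,p,c)$-sets, Deuber's theorem, the embedding of $(M,P,C)$-sets into $k$-IP-sets for $k$ large, and the fact that $\overline{IP}$-sets contain $k$-IP-sets for every $k$. Your careful attention to the quantifier order ($r$, then $m,p,c$, then $M,P,C$, then $k$) matches the intended argument precisely.
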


\subsection{Preliminary classical results of Ramsey theory}
Let us formally state four results that will be mentioned or used. Most of these results are proved in \cite{GRS90}.

\begin{theorem}[Van der Waerden (1927)]
    For every $r,k\in \N$ there exist $n\in \N$ such that for every coloring of $\{1,\ldots,n\}$ with $r$ colors, there exist $a,d \in \N$ such that
    $$\{a, a+d, a+2d, \ldots, a + (k-1)d\}$$
    is a monochromatic subset of $\{1,\ldots,n\}$. (See also \cite{vdW28}) The minimal value of $n$ is denoted $\W(r,k)$.
\end{theorem}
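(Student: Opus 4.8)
The plan is to prove the statement by induction on the progression length $k$, keeping the number of colors $r$ arbitrary at every stage, and to extract from the induction the desired bound $\W(r,k)$. The base cases are immediate: for $k=1$ any single element is a monochromatic progression, while for $k=2$ the pigeonhole principle gives $\W(r,2)=r+1$, since among any $r+1$ integers two must share a color. So I would fix $k\geq 3$, assume that $\W(r,k-1)$ exists for \emph{every} $r$, and aim to produce $\W(r,k)$. The central gadget is a family of \emph{color-focused} progressions: a collection of $\ell$ monochromatic arithmetic progressions of length $k-1$, say with terms $a_t, a_t+d_t,\ldots,a_t+(k-2)d_t$, having pairwise distinct colors, and whose common ``next term'' $a_t+(k-1)d_t$ all coincide at a single focus $f$.

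The heart of the argument is an auxiliary claim proved by a second, inner induction on $\ell$: for each $\ell\leq r$ there is an integer $N(\ell)$ so that every $r$-coloring of $\{1,\ldots,N(\ell)\}$ contains either a monochromatic progression of length $k$, or $\ell$ color-focused progressions of length $k-1$ lying in the interval together with their common focus. For the inner base case $\ell=1$ I would apply $\W(r,k-1)$ inside a slightly enlarged interval, so that the focus of the resulting monochromatic $(k-1)$-term progression still lands in range. For the inductive step, assuming $N(\ell-1)=:n$ is available, I would partition a long interval into consecutive blocks of length $n$ and color each block by its entire pattern, an element of at most $r^{n}$ super-colors; applying $\W(r^{n},k-1)$ to this block-coloring yields $k-1$ identically-patterned blocks whose indices form an arithmetic progression with some step $s$. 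Inside one such block the claim for $\ell-1$ gives either a length-$k$ progression (and we are done) or $\ell-1$ color-focused progressions with focus $f$ and colors $c_1,\ldots,c_{\ell-1}$. Writing $\delta$ for the integer spacing between corresponding points of consecutive chosen blocks, replacing each difference $d_t$ by $d_t+\delta$ slides the $t$-th progression diagonally across the identically-patterned blocks while preserving its color $c_t$, and moves all foci to the common value $F=f+(k-1)\delta$; the progression $f,f+\delta,\ldots,f+(k-2)\delta$ is itself monochromatic in the color $c_0$ of $f$, which must be distinct from $c_1,\ldots,c_{\ell-1}$ (else the in-block configuration already contained a length-$k$ progression). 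These $\ell$ progressions are then color-focused at $F$ with distinct colors, completing the step.

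Finally, running the auxiliary claim up to $\ell=r$ forces the conclusion: the focus $f$ receives one of the $r$ colors, which necessarily coincides with the color of one of the $r$ color-focused progressions, and appending $f$ to that progression extends it to a monochromatic progression of length $k$. This gives $\W(r,k)\leq N(r)$ and closes the outer induction. I expect the main obstacle to be purely the bookkeeping in the inner inductive step: one must size the enclosing interval so that the block-coloring argument applies, so that every focus produced stays inside the interval, and so that the diagonal combination lands on genuine integer points with the correct common difference and an honestly new color. Everything else is pigeonhole and careful index tracking. Alternatively, once the Hales-Jewett theorem is available, the entire statement drops out by reading a combinatorial line over the alphabet $\{0,1,\ldots,k-1\}$ as an arithmetic progression, which is precisely the route exploited by this paper in \cref{INTRO_HJ_equiv_VDWforIP}.
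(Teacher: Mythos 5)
Your proposal is correct in structure, but note that the paper does not actually prove this statement at all: Van der Waerden's theorem appears there as a preliminary classical result, stated with a pointer to \cite{GRS90} (where it is obtained as a corollary of the Hales-Jewett theorem), and the paper's own machinery in \cref{sectionHJ} likewise treats arithmetic-progression statements as consequences of Hales-Jewett via the map $(\epsilon_1,\ldots,\epsilon_n)\mapsto\sum_i \epsilon_i x_i$. What you give instead is the classical self-contained double induction with color-focused progressions: outer induction on the length $k$ (with the number of colors kept universally quantified, which is essential, since the block argument invokes $\W(r^{n},k-1)$ with a huge number of colors), inner induction on the number $\ell$ of color-focused $(k-1)$-progressions, diagonal progressions across identically patterned blocks, and a final pigeonhole on the color of the focus. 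All the key ideas are present and correctly deployed; the one place needing care is the sizing you yourself flag: both at $\ell=1$ and in the inductive step the interval must be enlarged (e.g.\ doubled) so that foci stay in range --- in particular, the block-level progression $B_s, B_{s+t},\ldots,B_{s+(k-2)t}$ produced by $\W(r^{n},k-1)$ must be found in the first half of the block sequence so that the focus block of index $s+(k-1)t$ still lies inside the colored interval. This is routine and does not affect correctness. The trade-off between the two routes is the expected one: your argument is elementary and self-contained but yields Ackermann-type bounds and proves only this theorem, whereas the paper's preferred derivation from Hales-Jewett (which you correctly identify as the alternative, and which is the route consonant with \cref{INTRO_HJ_equiv_VDWforIP}) gets Van der Waerden essentially for free once the stronger combinatorial theorem is available, and is the form that generalizes to the $k$-IP-set setting that is the paper's actual subject.
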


\begin{theorem}[Brauer (1928)]
    For every $k,s\in \N$ there exist $n\in \N$ such that for every coloring of $\{1,\ldots,n\}$ with $r$ colors, there exist $a,d \in \N$ such that
    $$\{a, a+d, a+2d, \ldots, a + (k-1)d\} \cup \{sd\}$$
    is a monochromatic subset of $\{1,\ldots,n\}$. The minimal value of $n$ is denoted $\B(r,k,s)$.
\end{theorem}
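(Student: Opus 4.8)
The plan is to derive Brauer's theorem from Van der Waerden's theorem (stated just above) by induction on the progression length $k$, adapting the classical color-focusing argument so that the auxiliary element $sd$ is carried passively inside the monochromatic configurations.

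Fix the number of colors $r$ and the parameter $s$. I will call a set of the form $\{se\}\cup\{b, b+e, \ldots, b+(k-1)e\}$ a \emph{Brauer $k$-configuration} (with base $b$ and difference $e$), and prove by induction on $k$ that there is an $n = \B(r,k,s)$ forcing a monochromatic one in every $r$-coloring of $\{1, \ldots, n\}$. The case $k=1$ is immediate: taking $d=1$ and $a=s$ makes $\{sd\}\cup\{a\} = \{s\}$ monochromatic, so $\B(r,1,s)=s$ works. For the inductive step I would run the focusing scheme. Say that configurations $C_1, \ldots, C_t$, with $C_i$ of difference $e_i$ and base $b_i$, are \emph{color-focused at $f$} if each $C_i$ is monochromatic, $b_i + k e_i = f$ for all $i$ (so they share a common ``next term'' $f$), and their colors are pairwise distinct. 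If one ever assembles $r$ color-focused Brauer $k$-configurations at a point $f \leq n$, then $f$ carries one of the $r$ colors, which must coincide with the color $\chi$ of some $C_i$; appending $f = b_i + k e_i$ to $C_i$ then yields a monochromatic Brauer $(k+1)$-configuration $\{se_i\}\cup\{b_i, \ldots, b_i + ke_i\}$, since $se_i$ was already of color $\chi$.

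The core of the step is therefore to guarantee, inside a sufficiently long interval, either a monochromatic Brauer $(k+1)$-configuration or a fan of $r$ color-focused Brauer $k$-configurations. This I would obtain by a secondary induction on the number $\ell \leq r$ of already-focused configurations: partition the interval into equal blocks, apply the level-$\ell$ statement within a single block, and use the pigeonhole principle on the color patterns of the blocks to find two identically colored blocks whose configurations can be amalgamated into $\ell + 1$ focused ones, exactly as in the proof of $\W(r,k)$.

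The main obstacle is precisely this amalgamation step, and it is where Brauer genuinely departs from Van der Waerden. Amalgamating configurations across two blocks enlarges the common differences $e_i$ by a multiple of the block spacing, and the carried element $se_i$ changes with them; keeping the configuration monochromatic therefore requires controlling the color of these new multiples $se_i$, which a single application of Van der Waerden cannot do: inside one monochromatic progression every element is congruent to $b$ modulo $e$, whereas $se$ is a multiple of $e$, so $se$ cannot be anchored to the progression unless $b$ happens to be divisible by $e$. I would resolve this by \emph{augmenting} the coloring used in the pigeonhole step so that, besides the internal pattern of a block, it also records the colors $c(se')$ of the finitely many difference-values $e'$ that amalgamation can produce. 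This keeps the number of ``super-colors'' finite, so Van der Waerden still applies to the indices of the blocks, and it forces the two matched blocks to agree on the colors of the relevant multiples; consequently $se_i$ retains color $\chi$ throughout. With this bookkeeping the secondary induction closes, the fan of $r$ configurations is produced, and $\B(r,k+1,s)$ exists, completing the induction on $k$.
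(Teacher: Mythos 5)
There is a genuine gap, and it sits exactly where you locate it: the amalgamation step. (For context: the paper does not prove this statement at all — it lists it as a classical preliminary, citing \cite{GRS90} — so your proposal must stand on its own.) Your repair via an augmented super-coloring does not work, for a structural reason. A super-coloring can only attach to a block data that is a function of that single block; but the differences $e' = e_i + Lt$ created by amalgamation (with $L$ the block length) depend on $t$, the spacing of the matched blocks, which is determined only \emph{after} the super-coloring has been fixed and the pigeonhole/Van der Waerden step has been applied. So ``the finitely many difference-values $e'$ that amalgamation can produce'' is not a well-defined per-block datum. If instead you append to every block's super-color the colors $c(se')$ for all candidate $e'$, that appended information is \emph{global} — identical for every block — so matched blocks agree on it vacuously, and the matching forces nothing: there is still no reason why the new companion $s(e_i+Lt)$, a point sitting near $sLt$, far from both matched blocks, should carry the color $\chi$. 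The block machinery can only synchronize the coloring at \emph{translates} of block-internal positions, whereas the companion is pinned to the absolute position $s\cdot(\text{difference})$. Put differently, Brauer configurations are dilation-invariant but not translation-invariant, and translation invariance is precisely the resource that color-focusing consumes; the same uncontrolled companion (here $sLt$) also breaks the new ``diagonal'' configuration built through the old focus $f$. This is why no induction on $k$ alone along these lines is known to close.

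The standard proof (the one in \cite{GRS90}) inducts on the number of colors $r$ instead, exploiting dilation invariance, and you could substitute it wholesale: with $M = \B(r-1,k,s)$, apply Van der Waerden to find a monochromatic arithmetic progression $a, a+d, \ldots, a+(k-1)Md$ of some color $\chi$. If $c(sjd) = \chi$ for some $1 \leq j \leq M$, then the length-$k$ sub-progression of difference $jd$ together with $sjd$ is a monochromatic Brauer configuration and you are done. Otherwise the numbers $sd, 2sd, \ldots, Msd$ avoid $\chi$, so $j \mapsto c(sdj)$ is an $(r-1)$-coloring of $\{1,\ldots,M\}$; by the induction hypothesis it contains a monochromatic configuration $\{b, b+e, \ldots, b+(k-1)e\} \cup \{se\}$, and dilating by $sd$ yields $\{a', a'+d', \ldots, a'+(k-1)d'\} \cup \{sd'\}$ with $a' = sdb$ and $d' = sde$, monochromatic for $c$ — here the identity $sd \cdot se = s \cdot (sde)$ is exactly the dilation invariance that your translation-based scheme lacks. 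Your base case $k=1$ and the final ``append the focus'' step are fine; it is only the construction of the fan of focused configurations that cannot be salvaged as proposed.
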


\begin{definition}
    A system of equations is called \emph{partition regular} on a set of numbers $X$ if for every finite coloring of $X$, there exists a monochromatic solution in $X$.
\end{definition}

\begin{definition}\label{columns_condition}
    A matrix $A$ satisfies the \emph{columns condition} if up to reordering the columns, we can split the matrix into $t$ blocks, i.e. submatrices;
    $$\begin{bmatrix}
            \quad A_1 & \bigm| & A_2 & \bigm| & \cdots & \bigm| & A_t \quad
            \end{bmatrix}$$
    such that for each block $A_i$, the sum of the vectors in $A_i$ is
    \begin{itemize}
        \item[-] the zero vector if it is the leftmost block, i.e. if $i=1$.
        \item[-] a linear combination of the columns on the left of this block, i.e. columns in $\begin{bmatrix}
            A_1 & A_2 & \cdots & A_{i-1} \end{bmatrix}$, otherwise.
    \end{itemize} 
\end{definition}

\begin{theorem}[Rado (1933)]\label{RadosTheorem}
        Consider a system of linear equations with integer coefficients in matrix form
        $$A\Vec{x} = \Vec{0}$$
        where $A \in \Z^{m \times n} $ is a matrix with non zero columns.
        Then the system is partition regular on $\N$ if and only if $A$ satisfies the columns condition.
\end{theorem}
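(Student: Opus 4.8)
The statement to be proved is Rado's theorem, so the plan is to establish both implications of the equivalence ``partition regular on $\N$'' $\iff$ ``$A$ satisfies the columns condition'', the non-zero-columns hypothesis serving only to discard free variables that contribute nothing to $A\vec{x}=\vec{0}$. The natural strategy is to bootstrap everything from the single-equation case, where the columns condition degenerates into the existence of a nonempty set of coefficients summing to zero and where Brauer's theorem already supplies all the Ramsey-theoretic input.

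For sufficiency (columns condition $\Rightarrow$ partition regular) I would first settle a single equation $c_1 x_1 + \cdots + c_n x_n = 0$ for which some nonempty $I \subseteq \{1,\ldots,n\}$ satisfies $\sum_{i\in I} c_i = 0$. Given an $r$-coloring of $\N$, put $d := \sum_{i \notin I} c_i$, fix $g := \gcd\{c_i : i \in I\}$ and $s := g/\gcd(g,d)$ in advance, and take $k$ large. Brauer's theorem yields $a,D$ with $\{a, a+D, \ldots, a+(k-1)D\}\cup\{sD\}$ monochromatic. Setting $x_i = a + \lambda_i D$ for $i \in I$ and $x_i = sD$ for $i \notin I$, the equation collapses to $D\bigl(\sum_{i\in I} c_i\lambda_i + s\,d\bigr)=0$ since $\sum_{i\in I}c_i = 0$; because $g \mid sd$ and (the columns being nonzero) $I$ carries coefficients of both signs, for $k$ large one can choose $\lambda_i \in \{0,\ldots,k-1\}$ with $\sum_{i\in I}c_i\lambda_i = -sd$, producing a monochromatic solution. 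For a general system obeying \cref{columns_condition} I would pass through $(m,p,c)$-sets: the nested block structure lets one write down an explicit solution of $A\vec{x}=\vec{0}$ whose coordinates all lie inside a single $(m,p,c)$-set (the leftmost block contributes a vanishing column sum, and each later block is absorbed by the span of the earlier generators), while Deuber's theorem \cite{Deu73} furnishes a monochromatic $(m,p,c)$-subset inside any finite coloring of $\N$. Composing the two gives the monochromatic solution.

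For necessity (partition regular $\Rightarrow$ columns condition) I would, arguing contrapositively, analyze an arbitrary monochromatic solution under a carefully chosen coloring. Fix a large prime $p$ and color each $n\in\N$ by the least significant nonzero digit of its base-$p$ expansion, a coloring with $p-1$ colors. Let $\vec{x}$ be monochromatic with common lowest nonzero digit $\delta$, and filter the indices by $p$-adic valuation: let $v_1 < \cdots < v_t$ be the distinct values of $v_p(x_i)$ and $S_j := \{i : v_p(x_i) = v_j\}$. Reducing $A\vec{x}=\vec{0}$ modulo $p^{v_1+1}$ kills every term outside $S_1$ and leaves $\delta\,p^{v_1}\sum_{i\in S_1}\vec{a_i}\equiv\vec{0}\pmod{p^{v_1+1}}$, hence $\delta\sum_{i\in S_1}\vec{a_i}\equiv\vec{0}\pmod{p}$; as $\gcd(\delta,p)=1$ and $p$ exceeds all the entries involved, this forces $\sum_{i\in S_1}\vec{a_i}=\vec{0}$, the leftmost block. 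Iterating through the valuation filtration and tracking the higher base-$p$ digits of the already-used coordinates should show that each $\sum_{i\in S_j}\vec{a_i}$ lies in $\Span\{\vec{a_i} : i \in S_1\cup\cdots\cup S_{j-1}\}$, which is exactly the columns condition with blocks $S_1,\ldots,S_t$.

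I expect the main obstacle to be the necessity direction, precisely in upgrading the first-block computation to the full nested structure. Once one descends below the minimal valuation, the coordinates in the earlier blocks carry a full range of higher base-$p$ digits, so controlling their contribution modulo successive powers of $p$ — while keeping $p$ large enough relative to $n$ and $\max_{i,j}|a_{ij}|$ that every ``$\equiv\vec{0}\pmod{p}$'' can be promoted to an honest equality over $\Z$ — is the delicate part. On the sufficiency side the only genuine work is exhibiting the explicit solution inside an $(m,p,c)$-set, since the Ramsey content is outsourced to Brauer's and Deuber's theorems.
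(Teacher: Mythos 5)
The paper does not prove this statement at all: Rado's theorem is quoted as a classical preliminary with a pointer to \cite{GRS90}, so your proposal can only be measured against the standard proof. Your sufficiency direction follows that standard route and is essentially sound. For a single equation the Brauer argument is correct, and the representability of $-sd$ by $\sum_{i\in I}c_i\lambda_i$ with $\lambda_i\in\{0,\ldots,k-1\}$ is even easier than your ``both signs'' remark suggests: since $\sum_{i\in I}c_i=0$, you may translate any Bezout solution $(\mu_i)_{i\in I}$ of $\sum_{i\in I}c_i\mu_i=-sd$ by one large constant without changing the sum, making all entries nonnegative, and then take $k$ large; note $s$, $d$, $g$ depend only on the coefficients, so fixing them before invoking Brauer is legitimate. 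For systems, passing through an explicit solution inside an $(m,p,c)$-set and outsourcing the Ramsey content to Deuber's theorem is exactly the modern argument the paper itself alludes to in its introduction; the lemma that the columns condition produces such a solution (with $p,c$ chosen to clear the denominators of the rational combinations) is real work but standard.

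The genuine gap is in necessity, precisely at the step you flag. For $j\geq 2$ the congruence modulo $p^{v_j+1}$ has the shape $\delta\sum_{i\in S_j}\Vec{a_i}+\sum_{i\in S_1\cup\cdots\cup S_{j-1}}d_i\,\Vec{a_i}\equiv\Vec{0}\pmod{p}$, where the $d_i\in\{0,\ldots,p-1\}$ are (carry-adjusted) base-$p$ digits of the earlier-block coordinates at place $p^{v_j}$. These unknown coefficients grow with $p$, so no choice of ``$p$ large relative to $n$ and $\max_{i,j}\abs{a_{ij}}$'' can promote the congruence to an equality over $\Z$ --- nor should it be able to: the columns condition only asserts membership in $\Span_{\Q}\{\Vec{a_i}: i\in S_1\cup\cdots\cup S_{j-1}\}$, whose witnessing rational coefficients may have arbitrarily large numerators and denominators, so no single prime can certify the dependence. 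The classical repair, absent from your sketch, is global rather than local: run the coloring for every sufficiently large prime $p$, use the pigeonhole principle over the finitely many ordered partitions of $\{1,\ldots,n\}$ to fix one tuple $(S_1,\ldots,S_t)$ that occurs for infinitely many primes, and then apply the linear-algebra lemma that a fixed integer vector which is, for infinitely many primes $p$, congruent mod $p$ to some $\Zn{p}$-linear combination of a fixed set of columns must lie in the rational span of those columns (compare the rank of the matrix of those columns with the rank of the matrix augmented by the vector, over $\Q$ and modulo $p$). Without this compactness-over-primes step your argument establishes only the first block $\sum_{i\in S_1}\Vec{a_i}=\Vec{0}$, which is the one place where the digit coefficients are known to equal $\delta$ and the promotion to an integer identity is valid.
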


For a single equation the theorem rephrases as follows:

\textit{
    Let $a_1, \ldots, a_n \in \Z^*$. The following are equivalent: 
    \begin{itemize}
        \item[-]  The equation $a_1 x_1 + \cdots + a_n x_n = 0$ is partition regular on $\N$.
        \item[-]  There exists $I \subseteq \{1,\ldots,n\}$ such that $\sum_{i\in I} a_i = 0$.\\
    \end{itemize}
}

\begin{definition}\label{parametricword}
    A \emph{parametric word} of the set $\llbracket 0,t \rrparenthesis^n$ is an element of the set
    $$\left (\llbracket 0,t \rrparenthesis \cup \{*\} \right)^n \setminus \llbracket 0,t \rrparenthesis^n.$$
    We call the extra symbol “$*$" a \emph{variable} of the parametric word. For example, $(1,0,*,2)$ and $(*,*,2,1)$ are parametric words of $\llbracket 0,3 \rrparenthesis^4$.
    A \emph{combinatorial line} is a set of the form 
    $$\{w(0), w(1),\ldots,w(t-1)\},$$
    where $w$ is a parametric word and $w(i)$ is obtained by replacing every occurrence of $*$ by $i$. Each parametric word corresponds to a unique combinatorial line and vice versa. For example, we have 
    $$(1,0,*,2) \leftrightarrow \{(1,0,0,2),\;(1,0,1,2),\;(1,0,2,2)\}.$$    
\end{definition}

\begin{theorem}[Hales-Jewett (1963)]
    For every $r,t \in \N$ there exists $n\in \N$ such that for every coloring of $\llbracket 0,t \rrparenthesis^n$ with $r$ colors, there exists a monochromatic combinatorial line. (See also \cite{HJ63}) The minimal value of $n$ is denoted $\HJ(r,t)$.
\end{theorem}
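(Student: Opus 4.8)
The plan is to prove the theorem by induction on the alphabet size $t$, following Shelah's argument, which isolates the combinatorial content in one pigeonhole step and keeps the bounds primitive recursive. Write $A := \llbracket 0,t\rrparenthesis$. The base case $t=1$ is immediate: then $A = \{0\}$ and, for any $n \ge 1$, the parametric word $w = (*,\ldots,*)$ has the one-point combinatorial line $\{w(0)\} = \{(0,\ldots,0)\}$, which is trivially monochromatic, so $\HJ(r,1)=1$ for every $r$.

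For the inductive step I would assume that $\HJ(r,t-1)$ exists for every $r$, fix $r$, and set $M := \HJ(r,t-1)$. Choosing a large $n = n_1 + \cdots + n_M$, I split the coordinates into consecutive blocks $I_1,\ldots,I_M$ with $|I_j| = n_j$ and view a point of $A^n$ as $(y_1,\ldots,y_M)$ with $y_j \in A^{n_j}$. The device attached to block $I_j$ is the family of parametric words $\lambda_j^{a}$, for $0 \le a \le n_j$, which fix the first $a$ coordinates of the block to the top symbol $t-1$ and place the variable on the remaining ones, so that $\lambda_j^{a}(s) = \big((t-1)^{a},\, s^{\,n_j-a}\big)$ and, crucially, $\lambda_j^{a}(t-1) = (t-1)^{n_j}$ independently of $a$.

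The heart of the argument is an in-block pigeonhole. To a threshold $a$ in block $I_j$ I attach its \emph{color profile}, the function recording, for every symbol $s \in A$ and every configuration of the remaining blocks, the color $c(\ldots,\lambda_j^{a}(s),\ldots)$; since there are only finitely many profiles, taking $n_j$ large forces two thresholds $a_j < b_j$ with identical profile. Comparing their values at $s = t-2$ shows that substituting the symbol $t-1$ for $t-2$ across coordinates $a_j+1,\ldots,b_j$ of block $I_j$ does not change the color --- a color-invisible move trading the top symbol against its neighbor. With one such move secured in every block, I define a reduced $r$-coloring $c'$ of the $(t-1)$-symbol cube $\llbracket 0,t-1\rrparenthesis^{M}$ by placing, in each block $I_j$, the value $z_j$ on the variable region $a_j+1,\ldots,b_j$ and the fixed pattern of the matched thresholds elsewhere. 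Since $M = \HJ(r,t-1)$, the inductive hypothesis gives a combinatorial line monochromatic for $c'$, moving a nonempty set $T$ of blocks through the values $0,\ldots,t-2$. Unfolding it produces a monochromatic combinatorial line in $A^n$ over the values $0,\ldots,t-2$; applying the color-invisible moves in the blocks of $T$ one at a time then appends the value $t-1$ at the same color, completing a full monochromatic combinatorial line.

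I expect the main obstacle to be the uniformity of the in-block pigeonhole: the color-invisible move in block $I_j$ must hold for \emph{every} configuration of the remaining blocks, including the mixed configurations that arise while appending $t-1$ to the blocks of $T$ one by one. Guaranteeing this forces the blocks to be processed in a fixed order, so that each threshold is chosen against an already-controlled family of contexts, and it is what drives the rapid growth of the sizes $n_j$; the difficulty is careful bookkeeping rather than a new idea. As an alternative, one could run the original Hales--Jewett double induction, producing many combinatorial lines focused at a common point and color-focusing them in the spirit of the van der Waerden proof, but the reduction above is cleaner and yields far better bounds.
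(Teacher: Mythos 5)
This theorem is not proved in the paper at all: it is listed among the ``Preliminary classical results of Ramsey theory,'' attributed to \cite{HJ63} with proofs deferred to \cite{GRS90}, and then used as a black box (together with its extended version, \cref{ExtendedHalesJewett}). So there is no internal proof to compare against; what you have written is a reconstruction of Shelah's 1988 argument, and as an outline it is correct: induction on the alphabet size $t$, a block decomposition with $M = \HJ(r,t-1)$ blocks, the threshold words $\lambda_j^a$ with the key identity $\lambda_j^a(t-1) = (t-1)^{n_j}$ independent of $a$, a pigeonhole on color profiles producing matched thresholds $a_j < b_j$ in each block, reduction to an $r$-coloring of $\llbracket 0,t-1 \rrparenthesis^M$, and completion of the resulting line to the full alphabet by a chain of color-invisible swaps. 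You also correctly identify, and correctly resolve, the one genuinely delicate point: the profiles cannot be taken over all configurations of the remaining blocks (that would make $n_j$ depend circularly on blocks not yet sized), so the blocks must be processed in a fixed order, with profiles over arbitrary values in the unprocessed blocks and over the already-frozen line families in the processed ones; this is exactly what produces the tower-type recursion for the $n_j$ and what keeps every intermediate point of your swap chain inside the covered contexts. Two small points deserve tightening in a full write-up. First, the move ``substituting $t-1$ for $t-2$ across coordinates $a_j+1,\ldots,b_j$ does not change the color'' is licensed by profile equality only when the rest of block $I_j$ matches the compared words, i.e.\ when the block reads $\lambda_j^{a_j}(t-2) = \bigl((t-1)^{a_j},(t-2)^{n_j-a_j}\bigr)$ versus $\lambda_j^{b_j}(t-2)$; your phrase ``the fixed pattern of the matched thresholds elsewhere'' is ambiguous, since the two matched words share only the prefix --- the tail $b_j+1,\ldots,n_j$ must be pinned to the comparison symbol $t-2$ in the embedding, which is what your construction implicitly does. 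Second, with the paper's definition of a combinatorial line as $\{w(0),\ldots,w(t-1)\}$, your base case $t=1$ (a one-point line) is fine, but it is worth saying explicitly that the embedded family $\{G(z) : z \in \llbracket 0,t \rrparenthesis\}$, with $z$ sitting exactly on $\bigcup_{j \in T}(a_j,b_j]$, is itself a combinatorial line in $\llbracket 0,t \rrparenthesis^n$ in this sense. Compared with simply citing the literature as the paper does, your route has the virtue of being self-contained and of yielding primitive recursive bounds on $\HJ(r,t)$, at the cost of the bookkeeping you acknowledge deferring.
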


A \emph{combinatorial $k$-dimensional subspace} is defined analogously to a combinatorial line but the corresponding parametric word must contain $k$ distinct variables. For example, $(*_1,2,*_2,*_1)$ is the parametric word in $\llbracket 0,3 \rrparenthesis^4$ corresponding to the combinatorial plane
    $$\{(0,2,0,0),(0,2,1,0),(0,2,2,0),\phantom{\}}$$
    $$\phantom{\{}(1,2,0,1),(1,2,1,1),(1,2,2,1),\phantom{\}}$$
    $$\phantom{\{}(2,2,0,2),(2,2,1,2),(2,2,2,2)\}.$$
    
\begin{theorem}[Hales-Jewett (extended version)]\label{ExtendedHalesJewett}
    For every $r,t,k \in \N$, there exists $n \in \N$ such that for every $r$-coloring of $\llbracket 0,t \rrparenthesis^n$, there exists a monochromatic combinatorial $k$-dimensional subspace. The minimal value of $n$ is denoted $\HJ^k(r,t)$.
\end{theorem}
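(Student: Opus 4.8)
The plan is to prove the statement by induction on the dimension $k$, bootstrapping from the ordinary Hales-Jewett theorem (the case $k=1$) via an iterated ``product'' argument. At each step I would peel off one fresh variable using a single application of Hales-Jewett over an inflated palette of colors, and then glue the resulting combinatorial line onto a lower-dimensional subspace supplied by the inductive hypothesis. The base case $k=1$ is immediate: a combinatorial $1$-dimensional subspace is exactly a combinatorial line, so $\HJ^1(r,t)=\HJ(r,t)$ exists by the Hales-Jewett theorem.

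For the inductive step I would assume $\HJ^{k-1}(r,t)$ exists, set $n_1 := \HJ^{k-1}(r,t)$ and $n_2 := \HJ\bigl(r^{t^{n_1}},\,t\bigr)$, and claim that $n := n_1 + n_2$ works. Given an $r$-coloring $c$ of $\llbracket 0,t \rrparenthesis^{n}$, I identify this cube with $\llbracket 0,t \rrparenthesis^{n_1} \times \llbracket 0,t \rrparenthesis^{n_2}$ and write points as pairs $(v,y)$. The key device is an auxiliary coloring $c'$ of the second block $\llbracket 0,t \rrparenthesis^{n_2}$ in which $c'(y)$ records the entire function $v \mapsto c(v,y)$. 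Since there are exactly $r^{t^{n_1}}$ such functions, $c'$ is a bona fide finite coloring with $r^{t^{n_1}}$ colors, so by the choice of $n_2$ the Hales-Jewett theorem furnishes a combinatorial line $L \subseteq \llbracket 0,t \rrparenthesis^{n_2}$ that is $c'$-monochromatic. Unpacking the definition of $c'$, this says $c(v,y)=c(v,y')$ for every $v$ and all $y,y'\in L$, i.e. the color is independent of the $L$-coordinate.

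Next I would freeze a representative $y_0 \in L$ and define the $r$-coloring $\tilde c(v) := c(v,y_0)$ of $\llbracket 0,t \rrparenthesis^{n_1}$. Because $n_1 = \HJ^{k-1}(r,t)$, the inductive hypothesis yields a $\tilde c$-monochromatic combinatorial $(k-1)$-dimensional subspace $S$, and I take the product $S \times L$. At the level of parametric words this is mere concatenation: $S$ is given by a word on the first $n_1$ coordinates with distinct variables $*_1,\dots,*_{k-1}$ (each occurring, since $S$ is genuinely $(k-1)$-dimensional), and $L$ by a word on the last $n_2$ coordinates with a single variable, which I rename $*_k$; the concatenated word carries $k$ distinct variables, each occurring, so $S\times L$ is a combinatorial $k$-dimensional subspace of $\llbracket 0,t \rrparenthesis^{n}$. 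It is monochromatic because for $s\in S$ and $y\in L$ one has $c(s,y)=c(s,y_0)=\tilde c(s)$, constant over $S$; here the first equality is the $L$-independence and the second the monochromaticity of $S$. This closes the induction and yields the recursive bound $\HJ^k(r,t)\le \HJ^{k-1}(r,t)+\HJ\bigl(r^{t^{\HJ^{k-1}(r,t)}},\,t\bigr)$.

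The step I expect to demand the most care is verifying that $S\times L$ is genuinely a combinatorial $k$-dimensional subspace in the exact sense of \cref{parametricword}: that the two parametric words sit on disjoint coordinate blocks, that renaming $L$'s variable to $*_k$ keeps all $k$ variables distinct, and that each variable genuinely occurs. Paired with the bookkeeping that $c'$ is a legitimate finite coloring so that Hales-Jewett applies, this gluing-and-freezing manipulation carries all the combinatorial content; the remainder is the mechanical induction.
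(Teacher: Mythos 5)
Your proof is correct, but note that the paper never proves this statement: it appears among the preliminary classical results and is delegated to the literature (\cite{GRS90}), so there is no in-paper argument to compare against. Your induction --- inflating the palette to $r^{t^{n_1}}$ colors so that a single application of the ordinary Hales-Jewett theorem produces a line $L$ along which the color is independent of the second block, then freezing $y_0 \in L$ and applying the inductive hypothesis to the first block --- is the standard product/color-focusing derivation of the multidimensional theorem, and you handle the delicate points correctly: $c'$ is a legitimate finite coloring, the concatenated parametric word has $k$ distinct variables each genuinely occurring on disjoint coordinate blocks, and monochromaticity of $S \times L$ follows from the two displayed equalities. For comparison, there is an even shorter classical route: identify $\llbracket 0,t \rrparenthesis^{km}$ with $\left(\llbracket 0,t \rrparenthesis^{k}\right)^m$ and apply the ordinary Hales-Jewett theorem once over the product alphabet of size $t^k$ with $m = \HJ(r,t^k)$; a combinatorial line over that alphabet, read back in $\llbracket 0,t \rrparenthesis^{km}$, is precisely a combinatorial $k$-dimensional subspace (the $j$-th coordinate of each variable block carries the variable $*_j$), giving $\HJ^k(r,t) \le k\,\HJ(r,t^k)$ with no induction at all. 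Your recursive bound $\HJ^k(r,t)\le \HJ^{k-1}(r,t)+\HJ\bigl(r^{t^{\HJ^{k-1}(r,t)}},t\bigr)$ is what the product argument buys; the alphabet blow-up trades the induction for one clean application at the cost of a larger alphabet.
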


Using Hindman's theorem \cite{Hin74} we can show the partition regularity of IP-sets \cite{Zhou17}.
\begin{corollary}\label{IPpartregIP}
    IP-sets are partition regular in any IP-set, i.e. for every finite coloring of an IP-set $Y$, there exists $X \subseteq Y$ infinite such that $\FS{X}$ is monochromatic subset of $Y$.
\end{corollary}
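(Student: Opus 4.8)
The plan is to deduce \cref{IPpartregIP} directly from Hindman's theorem. Recall that Hindman's theorem states that for every finite coloring of $\N$, there exists an increasing sequence $\seq{x}{i}$ such that $\FS{\seq{x}{i}}$ is monochromatic. First I would observe that an IP-set $Y$ by definition contains a set of the form $\FS{\seq{y}{i}}$ for some increasing sequence $\seq{y}{i}$, so it suffices to find a monochromatic $\FS{X}$ inside $\FS{\seq{y}{i}}$ with $X \subseteq Y$. A coloring $c$ of $Y$ restricts to a coloring of $\FS{\seq{y}{i}}$, and the natural idea is to pull this back to a coloring of $\N$ via the correspondence between $\N$ and finite sums of the $y_i$.

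The key step is to set up a bijection between $\N$ (or rather the index set of finite nonempty subsets of $\N$) and the elements of $\FS{\seq{y}{i}}$. Concretely, every element of $\FS{\seq{y}{i}}$ is a finite sum $\sum_{i \in F} y_i$ over a nonempty finite set $F \subseteq \N$; since the sequence is increasing, I would want these sums to be distinct so that the assignment $F \mapsto \sum_{i \in F} y_i$ is injective. I would then define a coloring $c'$ on $\N$ by identifying each $n \in \N$ with a finite subset $F_n$ (for instance via binary expansion, $n = \sum_{i \in F_n} 2^{i-1}$) and setting $c'(n) := c\bigl(\sum_{i \in F_n} y_i\bigr)$. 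Applying Hindman's theorem to $c'$ yields an increasing sequence $\seq{z}{j}$ with $\FS{\seq{z}{j}}$ monochromatic under $c'$. Finally, I would push this back: writing each $z_j$ as the subset $F_{z_j}$ and taking unions, one recovers a sub-sequence of $\seq{y}{i}$ whose finite-sums set is a monochromatic $\FS{X}$ with $X \subseteq \FS{\seq{y}{i}} \subseteq Y$.

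The main obstacle I anticipate is bookkeeping with the two-layered identification of finite sums, specifically ensuring that the finite-sums structure is preserved under the correspondence so that $\FS{}$ of the output sequence genuinely lands inside $\FS{\seq{y}{i}}$ and carries a constant color. The cleanest route is probably to invoke Hindman's theorem in its abstract form on the semigroup $(\partfin, \cup)$ of finite nonempty subsets of $\N$ under disjoint union, where it guarantees a sequence of pairwise disjoint finite sets $F_1, F_2, \ldots$ such that all finite unions $\bigcup_{j \in G} F_j$ receive the same color; then $x_j := \sum_{i \in F_j} y_i$ is exactly the desired increasing sequence, and finite sums of the $x_j$ correspond to finite unions of the $F_j$, which are monochromatic by construction. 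This avoids the injectivity worry entirely, since disjoint unions automatically give the correct additive structure on the $y_i$-sums.
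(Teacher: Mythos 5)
Your final route---invoking Hindman's theorem in its finite-unions form on the semigroup $(\partfin,\cup)$, taking pairwise disjoint $F_1, F_2, \ldots$ with all finite unions monochromatic under the pulled-back coloring $F \mapsto c\bigl(\sum_{i \in F} y_i\bigr)$, and setting $x_j := \sum_{i \in F_j} y_i$---is correct and is essentially the same approach as the paper, which obtains \cref{IPpartregIP} as a direct consequence of Hindman's theorem \cite{Hin74}, citing \cite{Zhou17} for exactly this standard derivation. One caveat: your first sketch (pulling the coloring back to $\N$ via binary expansions and applying Hindman's theorem there) would genuinely fail, since integer addition produces carries and so $z_j + z_{j'}$ need not encode $F_{z_j} \cup F_{z_{j'}}$; you were right to discard it in favor of the disjoint-union formulation, which sidesteps both this and the non-injectivity of $F \mapsto \sum_{i \in F} y_i$.
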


\subsection{Overview of the paper}
The paper is organized as follows. In \cref{sectionHJ} we prove the equivalency between the Hales-Jewett theorem and the version of Van der Waerden's theorem in $k$-IP-sets, which is the object of \cref{INTRO_HJ_equiv_VDWforIP}. We begin \cref{sectionEQ} by classifying every system of linear equations with integer coefficients that are partition-regular on IP-sets, which is the object of \cref{INTRO_classification_regular_systems_IP}. We then verify that our criterion is coherent with the columns condition in Rado's theorem. We continue \cref{sectionEQ} by explaining the difficulties we encounter when dealing with the general cases of $k$-IP-sets. This prompts us to consider shift-invariant equations which eventually leads to \cref{INTRO_suff_signature_kIP}. 

\subsection{Acknowledgments}
This paper grew from the author's master thesis at EPFL\footnote{École polytechnique fédérale de Lausanne, Switzerland}. The author expresses his sincere gratitude to his supervisor, Professor Florian K. Richter, for his guidance and advice that carried him through the stages of his research. The author would also like to reiterate his thanks for the proofreading, which greatly contributed to the quality of this article.

\section{Equivalency of the Hales-Jewett theorem with Van der Waerden's theorem in \texorpdfstring{$k-$}-IP-sets}\label{sectionHJ}
In this section, we prove \cref{INTRO_HJ_equiv_VDWforIP} which answers \cref{questionVDW_IP}.

\begin{theorem}\label{HJ_equiv_VDWforIP}
    The following statements are equivalent:
    \begin{enumerate}
        \item the Hales-Jewett theorem.
        \item Van der Waerden's theorem in $k$-IP-sets: for every $k \geq 2$ and for every $k$-IP-set which is finitely colored, there exists a monochromatic arithmetic progression of length $k$.
    \end{enumerate}
\end{theorem}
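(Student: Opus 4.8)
Both directions rest on the dictionary, attached to a sequence $\seq{x}{i}$, between the cube $\llbracket 0,k\rrparenthesis^n$ and the $k$-IP-set $\FS[k]{\seq{x}{i}}$ furnished by the evaluation map $\phi(\epsilon_1,\dots,\epsilon_n)=\sum_{i=1}^n \epsilon_i x_i$. Under $\phi$ a combinatorial line with parametric word $w$ (\cref{parametricword}) becomes an arithmetic progression: with $a=\sum_{j:\,w_j\neq *}w_j x_j$ and $d=\sum_{j:\,w_j=*}x_j$ one has $\phi(w(i))=a+id$ for $i\in\llbracket 0,k\rrparenthesis$, and $d>0$ since a parametric word carries at least one variable. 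So monochromatic combinatorial lines of length $k$ will be traded for monochromatic arithmetic progressions of length $k$, and the whole proof is about moving colorings across $\phi$.

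For $1\Rightarrow 2$, fix $k$, a $k$-IP-set $Y\supseteq\FS[k]{\seq{x}{i}}$ and an $r$-coloring $c$ of $Y$. I would set $n=\HJ(r+1,k)$ and pull $c$ back to $\llbracket 0,k\rrparenthesis^n$ by $c'(\vec\epsilon)=c(\phi(\vec\epsilon))$ for $\vec\epsilon\neq\vec 0$, reserving a fresh $(r+1)$-st color for $\vec 0$ alone; this is legitimate because $\phi(\vec\epsilon)\in\FS[k]{\seq{x}{i}}\subseteq Y$ whenever $\vec\epsilon\neq\vec 0$. The Hales-Jewett theorem produces a monochromatic combinatorial line, and since a line has $k\geq 2$ points while the fresh color is used only at the origin, the line avoids $\vec 0$ and is monochromatic for $c$. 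Its $\phi$-image is then a genuine length-$k$ progression $a,a+d,\dots,a+(k-1)d$ inside $Y$, as wanted.

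The direction $2\Rightarrow 1$ is the heart of the matter. The naive hope, to run $\phi$ backwards with $k=t$, fails: choosing $\seq{x}{i}$ to grow fast enough that each element has a unique digit expansion $m=\sum_j\epsilon_j(m)x_j$, one checks that every length-$t$ progression inside $\FS[t]{\seq{x}{i}}$ has digit vectors of the form $u+i\gamma$ with $\gamma\in\{-1,0,1\}$; these \emph{geometric lines} include reflected configurations (e.g.\ $\{2x_1,\,x_1+x_2,\,2x_2\}$ when $t\geq 3$) that are not combinatorial lines, so statement 2 at $k=t$ only yields the strictly weaker \emph{geometric-line} Ramsey statement. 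The key idea I would use to repair this is to apply statement 2 at $k=2t-1$ together with the capping map $\kappa(v)=\min(v,t-1)$. Concretely, given a coloring $\chi$ of the digit space $\llbracket 0,t\rrparenthesis^{(\N)}$, color the $(2t-1)$-IP-set $\FS[2t-1]{\seq{x}{i}}$ by $c(m)=\chi\bigl(\kappa(\epsilon(m))\bigr)$ (applying $\kappa$ coordinatewise) and let statement 2 hand us a monochromatic $(2t-1)$-progression, whose digit vectors form a geometric line $u+i\gamma$ with $\gamma\in\{-1,0,1\}$.

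The point of $k=2t-1$ is that the capping absorbs every reflection. Since the progression is increasing and the weights grow fast, the top moving coordinate has slope $+1$, so $P:=\{j:\gamma_j=+1\}\neq\emptyset$; on such coordinates $u_j=0$, whereas on $-1$-coordinates $u_j=2t-2$. Reading off only the first $t$ terms $i=0,\dots,t-1$ and applying $\kappa$, each $+1$-coordinate runs through $0,1,\dots,t-1$ (a variable), while each $-1$-coordinate stays saturated at $\kappa(2t-2-i)=t-1$ and each $0$-coordinate stays fixed; because $P\neq\emptyset$ this is exactly a monochromatic combinatorial $t$-line for $\chi$. Thus statement 2 yields a monochromatic combinatorial line for every coloring of the digit space, i.e.\ the infinitary Hales-Jewett theorem, and a standard compactness argument (patching bad finite colorings via K\"onig's lemma) recovers the finitary statement 1. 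I expect the genuine obstacle to be exactly this gap between progressions and combinatorial lines: one must recognise that monochromatic progressions are reflected lines and engineer the coloring --- via the choice $k=2t-1$ and the saturating map $\kappa$ --- so that the reflections collapse onto a true combinatorial line, and verifying that the fast growth really forces the clean shape $u+i\gamma$ is the remaining technical point.
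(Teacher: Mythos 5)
Your proof of $(1)\Rightarrow(2)$ is essentially the paper's: push the coloring through the evaluation map $\phi$, which carries combinatorial lines to $k$-term progressions, and apply the Hales-Jewett theorem. Your device of reserving a fresh $(r+1)$-st color for the origin is a clean way of handling the fact that $0\notin\FS[k]{\seq{x}{i}}$, a point the paper passes over in silence.

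Your proof of $(2)\Rightarrow(1)$ takes a genuinely different route, and this matters more than a stylistic difference: the paper's own route is defective, for exactly the reason you flag. The paper relies on \cref{combinatorialAP}, which asserts that for a sufficiently fast-growing sequence \emph{every} $k$-term arithmetic progression in $\FS[k]{\seq{x}{i}}$ pulls back to a combinatorial line, and then applies statement 2 with alphabet size $t=k$. That assertion is false no matter how fast the sequence grows: the reflected configuration $\{(k-1-j)x_1+jx_2 : 0\le j<k\}$ is always a $k$-term progression (common difference $x_2-x_1$) whose digit vectors $(k-1-j,\,j,\,0,\dots,0)$ do not form a combinatorial line. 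Concretely, $\{2,6,10\}\subseteq\FS[3]{1,5}$ is such a progression, and $x_1=1$, $x_2=5$ satisfies the paper's growth condition $x_2>2\max\FS[3]{x_1}=4$. The precise failure in the paper's induction is the claim that a progression meeting more than one block $A_j$ must have $d\ge x_{n+1}$: the gaps only force $d\ge x_{n+1}-\max\FS[k]{(x_i)_{i=1}^n}$, and when $d<x_{n+1}$ the auxiliary progression $\{a+j(d-x_{n+1})\}$ is decreasing in $j$, which is precisely the reflected case the case analysis cannot produce a parametric word for. Your work-around --- invoke statement 2 at $k=2t-1$ and pull the coloring back through the coordinatewise saturation $\kappa=\min(\,\cdot\,,t-1)$, so that on the first $t$ terms the $+1$-slope coordinates sweep $0,\dots,t-1$ while the $-1$-slope coordinates stay frozen at $t-1$ --- is correct, and the two steps you leave as routine really are: the geometric-line structure $u+i\gamma$ with $\gamma_j\in\{-1,0,1\}$ and top nonzero slope $+1$ follows from uniqueness of representations with coefficients of absolute value at most $k-1$ (the argument of \cref{uniqueness B negative}) together with the remark that $|\gamma_j|\ge 2$ would push some digit outside $\llbracket 0,k\rrparenthesis$ along a length-$k$ progression; and the passage from the infinitary to the finitary Hales-Jewett statement is a standard compactness argument. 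In short, your $(2)\Rightarrow(1)$ is not merely a valid alternative; it is, as far as I can see, what the paper's proof of that implication needs to be replaced by.
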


The implication $(1) \implies (2)$ is an unpublished result that Florian Richter left to the author as a task to start his master thesis.
\begin{proof}[Proof of $(1) \implies (2)$]
    The observation to make is that the function
    $$ \llbracket 0,k \rrparenthesis^{< \omega} \setminus \{(0,\ldots,0)\}\to \FS[k]{\seq{x}{i}}, \quad (\epsilon_1,\ldots,\epsilon_n) \mapsto \sum_{i=1}^n \epsilon_i x_i$$
    maps combinatorial lines to arithmetic progressions. Indeed, if $I$ is the set of indices on which the variable of the corresponding parametric word occurs, then the common difference is given by $\sum_{i\in I} x_i$ and the initial term is given by $\sum_{i \notin I} \epsilon_i x_i$.
    The rest of the proof consists of an application of the Hales-Jewett theorem.
\end{proof}

Conversely, we cannot say that a $k$-term arithmetic progression maps back to a parametric word in general. First, it could simply happen that the inverse of the function is not defined if the latter is not bijective. and even if it was bijective, it would not be sufficient. For example in $\FS{(3^i)_{i=0}^\infty}$, the $3$-term arithmetic progression $\{5,7,9\}$ maps back to the set
$$\{(2,1,0,\ldots,0), \; (1,2,0,\ldots,0), \; (0,0,2,0,\ldots,0)\}$$
which cannot be described with a parametric word. However, choosing the generating sequence sparse enough ensures it. This is the topic of the following lemma, which shows the implication $(2) \implies (1)$.

\begin{lemma}\label{combinatorialAP}
    For every $k\geq 3$, there exists a sequence $\seq{x}{i} \subseteq \N$ such that $k$-term arithmetic progressions in $\FS[k]{\seq{x}{i}}$ map back to combinatorial lines.\footnote{Implicitly, the lemma requires that the map is bijective.}
\end{lemma}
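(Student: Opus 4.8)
The plan is to choose a rapidly growing generating sequence and then analyze a $k$-term arithmetic progression ``digit by digit'' from the top, peeling off one coordinate at a time. First I would fix a growth condition guaranteeing unique representations, e.g. a super-increasing sequence with $x_{i+1} > 2k\sum_{j\le i}x_j$ (so that $x_i = (2k+1)^{i-1}$ works). This makes the evaluation map $\phi\colon (\epsilon_1,\ldots,\epsilon_n)\mapsto \sum_i \epsilon_i x_i$ injective on $\llbracket 0,k \rrparenthesis^{<\omega}$: every element $t$ of $\FS[k]{\seq{x}{i}}$ then has a unique digit expansion with $\epsilon_i(t)\in \llbracket 0,k \rrparenthesis$, so $\phi$ is a bijection onto the $k$-IP-set and ``mapping back'' is well defined, as the footnote requires.

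Next, given a $k$-term progression $\{t_0,\ldots,t_{k-1}\}\subseteq \FS[k]{\seq{x}{i}}$ with $t_j = t_0 + jd$, I would let $N$ be the largest coordinate on which the digits $\epsilon_N(t_0),\ldots,\epsilon_N(t_{k-1})$ are not all equal, and write $t_j = S + \epsilon_N(t_j)\,x_N + r_j$, where $S$ collects the (constant) higher coordinates and $r_j=\sum_{i<N}\epsilon_i(t_j)\,x_i$. The gap condition forces $r_j<x_N/2$, so $\epsilon_N(t_j)=\lfloor (t_j-S)/x_N\rfloor$ is non-decreasing in $j$. Writing each one-step increment as $\delta_j=\epsilon_N(t_{j+1})-\epsilon_N(t_j)$, the identity $d=\delta_j x_N+(r_{j+1}-r_j)$ together with $|r_{j+1}-r_j|<x_N/2$ forces all $\delta_j$ to coincide; since the common increment $\delta$ satisfies $(k-1)\delta\le k-1$ and $\delta\ge 1$, we get $\delta=1$, $\epsilon_N(t_0)=0$, and $\epsilon_N(t_j)=j$. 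Thus coordinate $N$ behaves exactly like the variable of a combinatorial line, and I would peel it off and induct on the truncated sequence $(x_i)_{i<N}$: the residuals $r_j$ form a $k$-term progression of common difference $\rho=d-x_N$ in the truncated $k$-IP-set (allowing the zero element), and \emph{provided this residual progression is non-decreasing} the inductive hypothesis yields a combinatorial line with variable set $I'\subseteq\{1,\ldots,N-1\}$, whence $I=I'\cup\{N\}$ and $d=\sum_{i\in I}x_i$ give the desired line.

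The main obstacle is precisely the sign of $\rho=d-x_N$ in the inductive step. The argument above pins down the top moving coordinate as ascending from $0$ to $k-1$, but it does not by itself prevent the residual progression from being decreasing ($\rho<0$); this is exactly the phenomenon behind the example $\{5,7,9\}$, and in its purest form it appears already as the progression $t_j=j\,x_2+(k-1-j)\,x_1$, whose preimage has coordinate $1$ descending while coordinate $2$ ascends, and hence is \emph{not} a single combinatorial line. The heart of the lemma is therefore to use the sparseness of $\seq{x}{i}$ to guarantee that the minimal term $t_0$ carries digit $0$ on every moving coordinate --- equivalently, that the residual difference $\rho$ is non-negative at every level --- so that all variable coordinates ascend in lockstep. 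I expect this to be the delicate part of the argument: a mere growth or uniqueness condition does not suffice, and one must either exploit finer arithmetic properties of the chosen sequence or restrict to progressions based at a point with vanishing digits on the active coordinates. It is here that I would concentrate the technical work.
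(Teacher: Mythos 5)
Your digit-by-digit analysis is correct as far as it goes, but the obstacle you flag at the end is not a piece of deferred technical work --- it is fatal to the statement itself. The progression you exhibit, $t_j = (k-1-j)x_1 + jx_2$ for $j \in \llbracket 0,k \rrparenthesis$, lies in $\FS[k]{\seq{x}{i}}$ for \emph{every} increasing sequence, is a genuine $k$-term arithmetic progression (common difference $x_2 - x_1$), and its preimage $\left\{(k-1-j,\,j,\,0,\ldots,0) : j \in \llbracket 0,k \rrparenthesis\right\}$ is never a combinatorial line, since the two active coordinates move in opposite directions. So no growth condition and no ``finer arithmetic property'' of the sequence can close the gap you describe: \cref{combinatorialAP} is false as stated, and the work you postponed cannot be carried out. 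What your lockstep argument actually proves (each digit of $t_j$ is of the form $c_i$, or $j$, or $k-1-j$) is the correct substitute statement: for a sufficiently sparse sequence, $k$-term progressions map back to \emph{geometric} lines, i.e.\ lines in which every active coordinate either ascends $0,\ldots,k-1$ or descends $k-1,\ldots,0$.

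For comparison, the paper's own proof breaks at exactly the point you identified. After splitting $\FS[k]{(x_i)_{i=1}^{n+1}}$ into the blocks $A_j = jx_{n+1} + \FS[k]{(x_i)_{i=1}^{n}}$, it asserts that a progression meeting more than one block must have $d \geq x_{n+1}$; your example has $d = x_{n+1} - x_1 < x_{n+1}$, so this deduction is unjustified, and the subsequent reconstruction (describing the residual progression by a word with $0$ in position $n+1$ and replacing that $0$ by $*$) silently assumes that the residual progression increases in step with the top digit, which is precisely what fails. (A smaller slip: the blocks should be $jx_{n+1} + \left(\FS[k]{(x_i)_{i=1}^{n}} \cup \{0\}\right)$, or else their union misses part of $\FS[k]{(x_i)_{i=1}^{n+1}}$.) The downstream implication $(2)\implies(1)$ of \cref{HJ_equiv_VDWforIP} can still be salvaged from your analysis by a folding trick: given an $r$-coloring of $\llbracket 0,k \rrparenthesis^n$, pull it back to $\llbracket 0,2k \rrparenthesis^n$ along the coordinatewise map $a \mapsto \min(a,\,2k-1-a)$; ascending and descending coordinates have the same image under this fold, so a monochromatic geometric line upstairs (obtained from statement (2) applied with $2k$ in place of $k$, via the geometric version of the lemma) projects, for the parameter values $j = 0,\ldots,k-1$, onto a monochromatic combinatorial line in $\llbracket 0,k \rrparenthesis^n$. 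Making this repair explicit, rather than searching for a cleverer sequence, is where the effort should go.
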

\begin{proof}
    Let us call an arithmetic progression \emph{combinatorial} if it maps back to a combinatorial line. We construct recursively the sequence by setting $x_1 = 1$ and $x_{n+1} > 2 \max \left (\FS[k]{(x_i)_{i=1}^n}\right )$ and show by induction on $n$ that $\FS[k]{(x_i)_{i=1}^n}$ does not contain non combinatorial arithmetic progressions. This choice of $\seq{x}{i}$, ensures that the map is bijective.\\

    If $n=1$, then $\FS[k]{(x_i)_{i=1}^n}= \{1,2,\ldots, k-1\}$ which does not contain a $k$-term arithmetic progression. Eventually, if we accept to add 0 to this set, then it would form a combinatorial arithmetic progression, with corresponding parametric word $*$.\\

    Suppose that $\FS[k]{(x_i)_{i=1}^n}$ does not contain non combinatorial arithmetic progressions. For $j \in \llbracket 0,k \rrparenthesis$, let 
    $$A_j = j x_{n+1} + \FS[k]{(x_i)_{i=1}^n}.$$
    Notice that these sets are respectively contained in intervals of constant length which is smaller than the gaps between them.
   Furthermore, notice that  $$A_0 \cup \cdots \cup A_{k-1} = \FS[k]{(x_i)_{i=1}^{n+1}}$$ and that 
    $$\sum_{i=1}^{n+1} \epsilon_i x_i \in A_j \iff \epsilon_{n+1}= j.$$ 
    
    Suppose that $\FS[k]{(x_i)_{i=1}^{n+1}}$ contains a $k$-term arithmetic progression of the form
    $$a, a+d , \ldots, a + (k-1)d.$$
    
    If this progression is contained in only one $A_j$, then shifting it by $-j x_{n+1}$ yields an arithmetic progression in $\FS[k]{(x_i)_{i=1}^{n}}$ which is combinatorial by the induction hypothesis. This shifted arithmetic progression can be described by a parametric word with $0$ on its $(n+1)$-th position. Replacing this 0 by $j$ yields a combinatorial word describing our initial arithmetic progression.\\

    If not, then we deduce that $d \geq x_{n+1}$ and that $a+ jd \in A_j$ for $j=0,\ldots,k-1$ due to the size of the gaps. It follows that 
    $$\{a+ j(d-x_{n+1}): 0 \leq j <k\}$$
    is either a $k$-term arithmetic progression in $\FS[k]{(x_i)_{i=1}^{n}}$ or a constant sequence of length $k$.
    
    In the first subcase, this arithmetic progression can be again described by a parametric word with 0 in its $(n+1)$-th position by the induction hypothesis. Replace this 0 by the variable $*$ of the combinatorial word so that the new word describes our initial arithmetic progression. 
    
    In the second subcase, it means that the sequence can be described by a constant word, i.e. without variable, on the first $n$ positions and that $d= \epsilon_{n+1}$. Replace the $(n+1)$-th position by $*$ once again so that the new word describes the initial arithmetic progression. \\

    In any case, the arithmetic progressions in $\FS[k]{(x_i)_{i=1}^{n+1}}$ are combinatorial.
    
\end{proof}

Furthermore, notice that a $k$-IP-set as in this proof cannot contain an arithmetic progression of length greater than $k$. By contradiction, if it did, then in particular it would contain an arithmetic progression of length $k+1$. Its first $k$ terms and its last $k$ terms would form two arithmetic progressions of length $k$ which correspond respectively to two distinct parametric words $w(x), \Tilde{w}(x)$ which satisfy $w(1)= \Tilde{w}(0)$ and $w(2)= \Tilde{w}(1)$. But this implies that the set of indices in which the variable $x$ occurs in these two words must be the same. This contradicts the fact that the map $(\epsilon_1,\ldots,\epsilon_n) \mapsto \sum_{i=1}^n \epsilon_i x_i$ is bijective. The next corollary therefore follows.

\begin{corollary}\label{boundedlengthAPinkIPset}
    For every $k\geq 2$, there exists a $k$-IP-set that does not contain arithmetic progressions of length greater than $k$.
\end{corollary}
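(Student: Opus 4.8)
The plan is to exhibit an explicit $k$-IP-set and prove it has no arithmetic progression of length $k+1$; this suffices, since any progression of length $>k$ contains one of length $k+1$. The natural candidate is the set $\FS[k]{\seq{x}{i}}$ constructed in \cref{combinatorialAP}, that is, $x_1 = 1$ and $x_{n+1} > 2\max\!\left(\FS[k]{(x_i)_{i=1}^n}\right)$, since exactly this lacunarity makes the representation map $(\epsilon_1,\ldots,\epsilon_n)\mapsto \sum_i \epsilon_i x_i$ bijective.

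For $k \ge 3$ I would formalize the observation already stated before the corollary. Suppose $a, a+d, \ldots, a+kd$ were a progression of length $k+1$ in the set. By \cref{combinatorialAP} its first $k$ terms and its last $k$ terms are each combinatorial lines, with parametric words $w, \tilde w$ and variable-index sets $I, \tilde I$. Reading off the common differences gives $\sum_{i\in I} x_i = d = \sum_{i \in \tilde I} x_i$, and since $d \in \FS[k]{\seq{x}{i}}$ has a unique representation, $I = \tilde I$. But the shared middle term satisfies $a+d = w(1) = \tilde w(0)$, and in the representation of $w(1)$ every coordinate indexed by $I$ equals $1$, whereas in $\tilde w(0)$ every coordinate indexed by $I$ equals $0$; as $I \ne \emptyset$ (because $d \ge 1$), this violates uniqueness of representations, a contradiction.

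The main obstacle is the case $k=2$, where \cref{combinatorialAP} is unavailable: a two-term progression need not map back to a combinatorial line, so the route above collapses. Here I would argue directly from the lacunary structure, and in fact the argument is uniform in $k$, so I am inclined to run it for all $k \ge 2$ and skip the combinatorial-line reduction altogether. Suppose $y_0 < y_1 < \cdots < y_k$ is a progression of common difference $d$ in the set; let $N$ be the largest index occurring in any of the representations and write $y_j = c_j x_N + r_j$ with top coefficient $c_j \in \llbracket 0,k\rrparenthesis$ and remainder $0 \le r_j \le \max\!\left(\FS[k]{(x_i)_{i<N}}\right) < x_N/2$. Then $\rho_j := r_{j+1}-r_j$ satisfies $\abs{\rho_j} < x_N/2$, so from $d = (c_{j+1}-c_j)x_N + \rho_j$ the integer $\delta := c_{j+1}-c_j$ is forced to be the same value (the nearest integer to $d/x_N$) for every $j$, and $\delta \ge 0$ because $d>0$. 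If $\delta \ge 1$ then $c_k = c_0 + k\delta \ge k$, contradicting $c_k \le k-1$; if $\delta = 0$ then $(r_j)_j$ is a progression of length $k+1$ in $\{0\}\cup \FS[k]{(x_i)_{i<N}}$. Induction on $N$, with base case $\{0,1,\ldots,k-1\}$ (which has only $k$ elements and hence no such progression), then completes the proof.
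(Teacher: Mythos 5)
Your proof is correct, and your final argument is genuinely different from the paper's --- and in one respect stronger. The paper's own proof is exactly your second paragraph: it views the first $k$ and the last $k$ terms of a putative $(k+1)$-term progression as combinatorial lines via \cref{combinatorialAP}, uses the equality of their common differences to force the two parametric words to have the same variable set, and then contradicts injectivity of the representation map through $w(1)=\tilde w(0)$. Your lacunarity argument (write $y_j = c_j x_N + r_j$ with $0 \le r_j < x_N/2$; the nearest-integer observation forces $\delta = c_{j+1}-c_j$ to be constant; $\delta \ge 1$ overflows the bound $c_k \le k-1$, while $\delta = 0$ drops the progression into $\{0\}\cup\FS[k]{(x_i)_{i<N}}$, closing an induction on $N$) bypasses combinatorial lines entirely. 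What this buys is precisely the case $k=2$, which the paper's argument does not literally cover: \cref{combinatorialAP} is stated only for $k\ge 3$, and necessarily so, since for $k=2$ a two-term progression is an arbitrary pair and need not map back to a combinatorial line (e.g.\ $\{x_1,x_2\}$ corresponds to $(1,0,0,\ldots)$ and $(0,1,0,\ldots)$, which form no line). So your uniform argument proves \cref{boundedlengthAPinkIPset} as stated for all $k\ge 2$ and is self-contained besides, at the price of redoing by hand the digit analysis that \cref{combinatorialAP} already encapsulates --- which is what the paper's shorter derivation buys for $k\ge 3$. One phrasing point: the statement you induct on must keep $0$ in the ambient set (no $(k+1)$-term progression in $\{0\}\cup\FS[k]{(x_i)_{i=1}^N}$), since the $r_j$ may vanish; your base case $\{0,1,\ldots,k-1\}$ shows you have indeed set it up this way.
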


\section{Partition regular linear equations with integer coefficients in \texorpdfstring{$k$}{k}-IP-sets}\label{sectionEQ}
In this section, we present our results related to \cref{questionRadoIPfixk}. We first treat the case of IP-set, i.e. when $k=2$, before treating the case of $k$-IP-sets in general.

\subsection{Classification of partition regular linear equations with integer coefficients in IP-sets}\label{subsectionREGEQIP}
In this subsection, we prove \cref{INTRO_classification_regular_systems_IP}.\\

It is tautological that if every IP-set contains a monochromatic subset of a certain type, such as an arithmetic progression of fixed length or a solution to some equation, then every IP-set without any coloring simply contains a subset of this type. What is less evident and perhaps surprising is that the converse also holds.
\begin{lemma}\label{neat_fact}
    Let $\FF \subseteq \PP(\N)$. The following statements are equivalent:

    \begin{enumerate}
        \item $\FF$ is partition regular on every $IP$-set, i.e. for every finite coloring of an IP-set there exists a monochromatic subset which belongs to $\FF$.
        \item Every $IP$-set contains a subset $A \in \FF$.
    \end{enumerate}
\end{lemma}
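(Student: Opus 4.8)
The plan is to prove the nontrivial direction $(2) \implies (1)$, since $(1) \implies (2)$ is tautological (take the trivial coloring with one color). So assume every IP-set contains a subset belonging to $\FF$, and let $c$ be a finite coloring of an arbitrary IP-set $Y$. I want to produce a monochromatic subset of $Y$ lying in $\FF$. The key tool is \cref{IPpartregIP}: applying it to the coloring $c$ on $Y$, there exists an infinite $X \subseteq \N$ such that $\FS{X} \subseteq Y$ and $\FS{X}$ is monochromatic under $c$.

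Now the crucial observation is that $\FS{X}$ is itself an IP-set. By hypothesis (statement 2), this IP-set $\FS{X}$ contains a subset $A \in \FF$. Since $A \subseteq \FS{X}$ and $\FS{X}$ is monochromatic, $A$ is monochromatic as well; and since $A \subseteq \FS{X} \subseteq Y$, we have found a monochromatic subset of $Y$ belonging to $\FF$. This establishes partition regularity of $\FF$ on $Y$, and since $Y$ was arbitrary, on every IP-set.

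The main conceptual point — and the only place any real work is hidden — is the invocation of \cref{IPpartregIP}, which is itself the corollary of Hindman's theorem already granted in the excerpt. The argument is otherwise a short two-step reduction: first pass from the coloring to a monochromatic sub-IP-set via Hindman, then use the (coloring-free) hypothesis on that sub-IP-set to extract the desired element of $\FF$. I do not expect a genuine obstacle here; the subtlety worth stating explicitly is that $\FF$ is an arbitrary family of subsets, so the argument must not assume any closure or structural property of $\FF$ beyond what statement 2 literally provides, namely that \emph{every} IP-set (in particular the monochromatic $\FS{X}$) contains a member of $\FF$.
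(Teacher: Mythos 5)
Your proof is correct and follows exactly the paper's route: the forward direction is trivial, and the reverse direction is precisely the application of \cref{IPpartregIP} (Hindman's corollary) to extract a monochromatic sub-IP-set $\FS{X}$, to which hypothesis (2) is then applied. The paper states this in one line; your write-up is simply the same argument made explicit.
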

\begin{proof}
    $(1) \implies (2)$ is obvious, while $(2) \implies (1)$ follows from \cref{IPpartregIP}.
\end{proof}

With this neat fact in hand, one can already prove the partition regularity of a couple of equations, such as Schur's equation:
$$x+y = z.$$
Indeed, $y_1,y_2, y_1+y_2 \in \FS{\{y_1,y_2\}}$ yields a solution. Let us prove \cref{INTRO_classification_regular_systems_IP} now.

\begin{theorem}[Rado's theorem on IP-sets]\label{classification_regular_systems_IP}
    Let $A \in \Z^{l\times n}$ with columns $\Vec{a_1}, \ldots, \Vec{a_n} \in \Z^l$. The following statements are equivalent:
    \begin{enumerate}
        \item The system of equations in matrix form $A\Vec{x} = \Vec{0}$, where $\Vec{x}= (x_1,\ldots,x_n)$, is partition regular on every IP-set.
        \item There exist $m\in \N$ and $I_1, \ldots, I_m \subseteq \{1,\ldots,n\}$ such that 
        $$\bigcup_{j=1}^m I_j = \{1,\ldots,n\} \text{ and } \forall j \in \{1,\ldots,m\} \; \sum_{i \in I_j} \Vec{a_i} = \Vec{0}.$$
    \end{enumerate}
\end{theorem}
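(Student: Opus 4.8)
The plan is to invoke \cref{neat_fact} with $\FF$ taken to be the family of subsets of $\N$ containing a solution to $A\vec{x}=\vec0$; this replaces statement 1 by the condition that \emph{every IP-set contains a solution}. It is convenient to first reformulate statement 2: a family $I_1,\ldots,I_m$ as described exists if and only if every index $i\in\{1,\ldots,n\}$ lies in some \emph{zero-sum} set of columns, that is, some $I\ni i$ with $\sum_{i'\in I}\vec{a_{i'}}=\vec0$. Indeed, a cover by zero-sum sets furnishes for each $i$ a zero-sum set containing it, and conversely the collection of such sets (one per index) covers $\{1,\ldots,n\}$.

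For $(2)\Rightarrow(1)$ I would argue directly that every IP-set contains a solution. Given $\FS{\seq{y}{j}}$ inside the IP-set, set
$$x_i \;=\; \sum_{j:\,i\in I_j} y_j \qquad (i=1,\ldots,n).$$
Each $x_i$ is a nonempty finite sum of distinct generators (nonempty because the $I_j$ cover $\{1,\ldots,n\}$), hence $x_i\in\FS{\seq{y}{j}}$; and exchanging the order of summation gives $\sum_i x_i\vec{a_i}=\sum_j y_j\sum_{i\in I_j}\vec{a_i}=\vec0$, so $(x_1,\ldots,x_n)$ is a solution. This direction needs no hypothesis on the growth of the generators.

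The main work is the converse $(1)\Rightarrow(2)$, which I would establish in contrapositive form by exhibiting a single IP-set with no solution whenever statement 2 fails; I expect this scale-separation step to be the crux. The idea is to choose the generating sequence to grow fast: pick $\seq{y}{j}$ with $y_j > M\sum_{j'<j}y_{j'}$, where $M:=\sum_i\norm{\vec{a_i}}$. Any solution uses finitely many generators, so write each coordinate in its IP-representation $x_i=\sum_j\epsilon_{i,j}y_j$ with $\epsilon_{i,j}\in\{0,1\}$. The equation becomes $\sum_j y_j\,\vec{b_j}=\vec0$, where $\vec{b_j}:=\sum_{i:\,\epsilon_{i,j}=1}\vec{a_i}$ satisfies $\norm{\vec{b_j}}\le M$. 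If some $\vec{b_j}\neq\vec0$, then for the largest such index $J$ the reverse triangle inequality and the growth condition give $\norm{\sum_{j\le J}y_j\vec{b_j}}\ge y_J - M\sum_{j<J}y_j>0$, a contradiction; hence every $\vec{b_j}=\vec0$, i.e. each nonempty level set $\{i:\epsilon_{i,j}=1\}$ is a zero-sum set of columns. Since every $x_i\neq0$, each index $i$ belongs to at least one such level set, so every index lies in a zero-sum set — which is precisely (the reformulation of) statement 2. Thus when statement 2 fails this IP-set admits no solution, and by \cref{neat_fact} statement 1 fails as well, completing the equivalence. The same super-increasing choice of generators, for which the subset-sum map is injective, also yields the distinctness refinement: $x_i=x_{i'}$ then holds exactly when the membership patterns $\{j:i\in I_j\}$ coincide, so the separation condition corresponds to these patterns being pairwise distinct.
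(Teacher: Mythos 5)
Your proposal is correct and follows essentially the same route as the paper: both directions reduce via \cref{neat_fact} to containment, the $(2)\Rightarrow(1)$ construction $x_i=\sum_{j:\,i\in I_j}y_j$ is identical, and your super-increasing sequence with the reverse-triangle-inequality estimate is just a repackaging of the paper's choice of generators $B^j$ together with its base-$B$ uniqueness lemma (\cref{uniqueness B negative}), a substitution the paper itself anticipates in \cref{sparse_enough}. The distinctness observation at the end likewise matches the paper's \cref{extra_distinct_solution}.
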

\begin{remark}
    The version of this theorem for a single equation is essentially obtained by replacing the word “column" with the word “coefficient".
\end{remark}
        
        Similarly as before, showing the implication $(1) \implies (2)$ requires to consider an IP-set generated by a sparse enough sequence. This motivates the following lemma before moving to the proof of the theorem.
        \begin{lemma}\label{uniqueness B negative}
            If $\sum_{i=1}^{m} \gamma_i B^i = \sum_{i=1}^{m} \delta_i B^i$ for some $B\in \N$ and $\gamma_i, \delta_i \in \Z$ such that $\abs{\gamma_i},\abs{\delta_i} < \frac{B}{2}$, then for every $i \in \{1,\ldots,m\}$, $\gamma_i = \delta_i$.
        \end{lemma}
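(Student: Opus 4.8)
The plan is to reduce the statement to the uniqueness of a signed base-$B$ representation of $0$. Setting $\eta_i := \gamma_i - \delta_i$ for each $i$, the hypothesis $\sum_{i=1}^{m} \gamma_i B^i = \sum_{i=1}^{m} \delta_i B^i$ rewrites as $\sum_{i=1}^{m} \eta_i B^i = 0$, and the triangle inequality gives $\abs{\eta_i} \le \abs{\gamma_i} + \abs{\delta_i} < \tfrac{B}{2} + \tfrac{B}{2} = B$. Since each $\eta_i$ is an integer, this means $\abs{\eta_i} \le B - 1$. It therefore suffices to show that if $\sum_{i=1}^{m} \eta_i B^i = 0$ with integer coefficients satisfying $\abs{\eta_i} < B$, then every $\eta_i$ vanishes; the desired conclusion $\gamma_i = \delta_i$ follows at once.

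For this reduced claim I would argue by contradiction, comparing the size of the leading nonzero term against the rest. Suppose some coefficient is nonzero and let $i_0$ be the largest index with $\eta_{i_0} \ne 0$. Then the top term has magnitude $\abs{\eta_{i_0} B^{i_0}} \ge B^{i_0}$, because $\eta_{i_0}$ is a nonzero integer. On the other hand, bounding the remaining terms by a geometric series yields
$$\abs{\sum_{i=1}^{i_0 - 1} \eta_i B^i} \le \sum_{i=1}^{i_0 - 1} (B-1) B^i = B^{i_0} - B < B^{i_0}.$$
Hence the lower-order part can never cancel the leading term, so $\sum_{i=1}^{m} \eta_i B^i \ne 0$, a contradiction. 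Therefore all $\eta_i = 0$.

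Alternatively, the same conclusion follows by induction: dividing $\sum_{i=1}^{m} \eta_i B^i = 0$ through by the common factor $B$ and reducing modulo $B$ isolates $\eta_1 \equiv 0 \pmod{B}$, which together with $\abs{\eta_1} < B$ forces $\eta_1 = 0$; peeling off $\eta_1$ and repeating the argument disposes of the remaining coefficients.

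The computation is entirely routine, so there is no genuine obstacle here; the one point deserving attention is exactly where the factor $\tfrac{1}{2}$ in the hypothesis enters. It is precisely the bound $\abs{\gamma_i}, \abs{\delta_i} < \tfrac{B}{2}$ that guarantees $\abs{\eta_i} < B$, which is the sharp threshold making the geometric tail $B^{i_0} - B$ strictly smaller than the leading $B^{i_0}$. Had one only assumed $\abs{\gamma_i}, \abs{\delta_i} < B$, the differences could reach $2B - 2$ and the representation would no longer be unique, so the strength of the assumed bound is essential rather than cosmetic.
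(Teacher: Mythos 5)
Your proof is correct, but it takes a different route from the paper's. You form the differences $\eta_i := \gamma_i - \delta_i$, observe that $\abs{\eta_i} < B$, and then prove from scratch that a vanishing signed combination $\sum_{i=1}^{m}\eta_i B^i = 0$ with $\abs{\eta_i}<B$ forces all $\eta_i=0$, either by comparing the leading nonzero term against the geometric tail $\sum_{i=1}^{i_0-1}(B-1)B^i = B^{i_0}-B < B^{i_0}$, or by the inductive reduction modulo $B$; both versions of this step are sound. The paper instead keeps the two sides of the equation separate and rearranges signs: it rewrites the hypothesis as an equality of two sums whose coefficients $\max(\gamma_i,0)+\abs{\min(\delta_i,0)}$ and $\max(\delta_i,0)+\abs{\min(\gamma_i,0)}$ are non-negative and strictly less than $B$, then invokes the standard uniqueness of base-$B$ representations with digits in $\llbracket 0,B \rrparenthesis$ as a black box, and finally does a small sign-case analysis to recover $\gamma_i=\delta_i$. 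So the decompositions differ (subtraction versus moving negative parts across the equals sign), and what each buys is clear: the paper's version outsources the digit-uniqueness fact and needs only the case check at the end, while yours is self-contained and makes the role of the threshold $\tfrac{B}{2}$ explicit --- your closing remark that weakening the hypothesis to $\abs{\gamma_i},\abs{\delta_i}<B$ destroys uniqueness (e.g. $(B-1)\cdot B = -1\cdot B + 1\cdot B^2$) is a worthwhile observation that the paper does not make.
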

        The proof relies on the uniqueness of a number's representation in base $B$.
        \begin{proof}
            First, we reorder the terms in the equality such that all summands are non-negative. Formally, this yields

            $$\sum_{i=1}^{m} \underbrace{\left ( \max(\gamma_i,0) + \abs{\min(\delta_i,0) } \right)}_{< 2 \frac{B}{2}= B }  B^i 
            =\sum_{i=1}^{m} \underbrace{ \left ( \max(\delta_i,0) + \abs{\min(\gamma_i,0) } \right)}_{< 2 \frac{B}{2}= B} B^i . $$

            Thus for $i \in \{1,\ldots,m\} $, we have
            $\max(\gamma_i,0) + \abs{\min(\delta_i,0)}  = \max(\delta_i,0) + \abs{\min(\gamma_i,0)}$. In any case of the signs of $\gamma_i$ and $\delta_i$, we deduce that $\gamma_i = \delta_i$.
        \end{proof}
    \begin{remark}\label{sparse_enough}
        In \cref{uniqueness B negative}, we can replace the powers $B^i$ by a sequence of numbers $(y_i)_{i=1}^m$ which increases sufficiently fast.
    \end{remark}
        
\begin{proof}[Proof of \cref{classification_regular_systems_IP}]
    $(2) \implies (1):$ Let $I_1,\ldots, I_m \subseteq \{1,\ldots,n\}$ such that 
        $$\bigcup_{j=1}^m I_j = \{1,\ldots,n\} \text{ and } \forall j \in \{1,\ldots,m\} \; \sum_{i \in I_j} \Vec{a_i} = \Vec{0}.$$

        In light of \cref{neat_fact}, we show that $\FS{(y_i)_{i=1}^m}$ contains a solution for any $y_1,\ldots,y_m \in \N$. We set 
        $$x_i = \sum_{j=1}^m \ind_{I_j}(i) y_j.$$
        Notice that $x_i \neq 0$ because $i\in I_j$ for some $j$ by assumption. Then we have
    \begin{align*}
        A\Vec{x} &= \sum_{i=1}^n x_i \Vec{a_i}
            = \sum_{i=1}^n \left (\sum_{j=1}^m \ind_{I_j}(i) y_j \right) \Vec{a_i}
            = \sum_{j=1}^m y_j \left(\sum_{i=1}^n \ind_{I_j}(i) \Vec{a_i}\right)\\
            &= \sum_{j=1}^m y_j\underbrace{\left (\sum_{i\in I_j} \Vec{a_i} \right )}_{=\Vec{0}}
            =\Vec{0}
    \end{align*}

    $(1) \implies (2):$ By partition regularity, there exist $m \in \N, x_1, \ldots, x_n \in \FS{(B^j)_{j=1}^m}$ where
    $$B= 2n  \max\left\{\abs{a_{i,j}}: 1 \leq i\leq n, \; 1 \leq j \leq m \right\} + 1$$
    such that
    $$A\Vec{x}= x_1 \Vec{a_1} + \cdots + x_n \Vec{a_n} =\Vec{0}.$$
    Write $$x_i= \sum_{j=1}^m \epsilon_{i,j} B^j$$ with $\epsilon_{i,j} \in \{0,1\}$ for $i=1,\ldots,n,\; j= 1,\ldots,m$. Then
    $$\sum_{i=1}^n \left( \sum_{j=1}^m \epsilon_{i,j} B^j\right) \Vec{a_i} = \Vec{0}$$
    and thus
    $$\sum_{j=1}^m B^j\left (\sum_{i=1}^n \epsilon_{i,j} \Vec{a_i} \right )= \Vec{0}.$$
    
    In particular for each line $ t \in \{1,\ldots,l\}$, we get
    $$\sum_{j=1}^m B^j\left (\sum_{i=1}^n \epsilon_{i,j} a_{i,t} \right )= 0.$$
    Since $$\abs{\sum_{i=1}^n   \epsilon_{i,j} a_{i,t}} \leq n \max\{\abs{a_{i,j}}: 1 \leq i\leq n, \; 1 \leq j \leq m\} < \frac{B}{2},$$
    it follows by \cref{uniqueness B negative} that for every $j\in \{1,\ldots,m\}$,
    $$\sum_{i=1}^n \epsilon_{i,j} a_{i,t} = 0.$$
    The equalities on each line imply
    $$\sum_{i=1}^n \epsilon_{i,j} \Vec{a_i}= \Vec{0}.$$
    
    For $j=1,\ldots,m$, let $$I_j = \{i \in \{1,\ldots,n\}: \; \epsilon_{i,j}= 1\}.$$ By construction,
    $$\sum_{i\in I_j} \Vec{a_i} = \sum_{i=1}^n  \epsilon_{i,j} \Vec{a_i} = \Vec{0}.$$
    Let $i \in \{1,\ldots,n\}$. Since $x_i \neq 0$, there exists $j\in \{1,\ldots,m\}$ such that $\epsilon_{i.j}= 1$ which implies that $i \in I_j$. Therefore $$\bigcup_{j=1}^m I_j = \{1,\ldots,n\}$$ and the criterion is satisfied.    
\end{proof}
\begin{corollary}\label{extra_distinct_solution}
    In \cref{classification_regular_systems_IP}, we can ask the solution to be distinct in statement 1 if and only if we add the condition
    $$\forall i \neq i' \in \{1,\ldots,n\}, \exists j \in \{1,\ldots,m\} : (i \in I_j\text{ and } i' \notin I_j) \text{ or } (i \notin I_j\text{ and } i' \in I_j)$$
    in statement 2.
\end{corollary}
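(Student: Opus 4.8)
The plan is to reuse the proof of \cref{classification_regular_systems_IP} almost verbatim, observing that the extra separation condition is precisely what forces the naturally constructed solution to have distinct entries. The guiding principle is that the entry $x_i$ of the solution built in that proof is governed by the characteristic vector $(\ind_{I_1}(i), \ldots, \ind_{I_m}(i))$, and that two entries $x_i$, $x_{i'}$ coincide exactly when these characteristic vectors agree, provided the generating sequence grows fast enough. The separation condition is simply the assertion that all $n$ characteristic vectors are pairwise distinct.

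For the implication ``separation condition $\implies$ distinct monochromatic solutions'', I would apply \cref{neat_fact} to the family $\FF$ of subsets of $\N$ that contain a distinct solution, reducing the claim to showing that every IP-set contains a distinct solution. Given an IP-set containing $\FS{(x_i)_{i=1}^\infty}$, I would first extract a subsequence $(y_j)_{j=1}^m$ increasing fast enough that the sparse version of \cref{uniqueness B negative} applies (see \cref{sparse_enough}); note $\FS{(y_j)_{j=1}^m} \subseteq \FS{(x_i)_{i=1}^\infty}$. As in the proof of $(2) \implies (1)$, set $x_i = \sum_{j=1}^m \ind_{I_j}(i)\, y_j$, which is a solution. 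For $i \neq i'$, the separation condition supplies a $j$ with $\ind_{I_j}(i) \neq \ind_{I_j}(i')$, so that $x_i - x_{i'} = \sum_{j=1}^m (\ind_{I_j}(i) - \ind_{I_j}(i'))\, y_j$ has coefficients in $\{-1,0,1\}$, not all zero. By \cref{sparse_enough} this difference is nonzero, hence $x_i \neq x_{i'}$, and the solution is distinct.

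For the converse ``distinct monochromatic solutions $\implies$ separation condition'', I would rerun the proof of $(1) \implies (2)$ applied to the sparse IP-set $\FS{(B^j)_{j=1}^m}$, now taking the guaranteed monochromatic solution to be distinct. This yields $I_j = \{i : \epsilon_{i,j} = 1\}$ together with the covering property $\bigcup_j I_j = \{1,\ldots,n\}$ and the zero-sum property $\sum_{i \in I_j} \Vec{a_i} = \Vec{0}$, exactly as before. Since the base-$B$ expansions $x_i = \sum_{j=1}^m \epsilon_{i,j} B^j$ are unique, distinct values $x_i$ force distinct digit vectors $(\epsilon_{i,j})_{j=1}^m$; that is, for every $i \neq i'$ there is a $j$ with $\epsilon_{i,j} \neq \epsilon_{i',j}$, which is precisely the separation condition.

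The step requiring the most care is the first direction: one must ensure that an arbitrary IP-set contains an FS-set on a sufficiently sparse sequence, so that \cref{sparse_enough} indeed converts distinct characteristic vectors into distinct entries $x_i$. This causes no real difficulty, since every increasing sequence admits a rapidly increasing subsequence and the FS-set of a subsequence is contained in the original FS-set; the only thing to verify is that the coefficients $\ind_{I_j}(i) - \ind_{I_j}(i') \in \{-1,0,1\}$ satisfy the hypothesis of the sparse uniqueness statement, which holds as soon as the gaps dominate the partial FS-sums.
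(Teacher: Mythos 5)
Your proposal is correct and follows essentially the same route as the paper: the paper's proof is precisely the observation (via \cref{sparse_enough}) that for a sufficiently sparse sequence $(y_j)_{j=1}^m$ the entries $x_i = \sum_{j=1}^m \ind_{I_j}(i)\, y_j$ coincide exactly when the characteristic vectors $(\ind_{I_j}(i))_{j=1}^m$ coincide, which is what you unpack in both directions. Your write-up is just a more detailed expansion of that one-line argument, plugging it into the two implications of \cref{classification_regular_systems_IP} exactly as the paper intends.
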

\begin{proof}
    In light of \cref{sparse_enough}, if $(y_i)_{i=1}^m$ increases sufficiently fast, then for every $i,i'\in \{1,\ldots,n\}$ we have the equivalence
    $$\left ( \sum_{j=1}^m \ind_{I_j}(i) y_j = \sum_{j=1}^m \ind_{I_j}(i') y_j \right ) \iff \left (  \forall j\in \{1,\ldots,m\}, i \in I_j \iff i' \in I_j\right ).$$
\end{proof}
\cref{classification_regular_systems_IP} and \cref{extra_distinct_solution} show \cref{INTRO_classification_regular_systems_IP}.

\subsubsection{Check that the criterion implies the satisfaction of the columns condition}\label{check_criterion_columns_condition}
Notice that if a system of equations is partition regular on every IP-set, then it is in particular partition regular on $\N$. Thus if a matrix satisfies the criterion in the statement of \cref{classification_regular_systems_IP}, then by Rado's theorem it must satisfy the columns condition. Let us verify this property as a sanity check.

\begin{proposition}
    Let $A \in \Z^{l\times n}$ with non-zero columns $\Vec{a_1}, \ldots, \Vec{a_n} \in \Z^l$. If there exist $m\in \N$ and $I_1, \ldots, I_m \subseteq \{1,\ldots,n\}$ such that 
        $$\bigcup_{j=1}^m I_j = \{1,\ldots,n\} \text{ and } \forall j \in \{1,\ldots,m\} \; \sum_{i \in I_j} \Vec{a_i} = \Vec{0},$$
    then $A$ satisfies the columns condition.
\end{proposition}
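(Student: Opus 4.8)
The plan is to construct, directly from the covering family $I_1,\ldots,I_m$, an ordered partition of the column indices into blocks that witnesses the columns condition. The only thing standing between the hypothesis and \cref{columns_condition} is that the $I_j$ are allowed to \emph{overlap}, whereas the columns condition asks for disjoint blocks arranged in a chain. So the first move is to disjointify the family while preserving its two essential features: that it covers $\{1,\ldots,n\}$ and that each set is zero-sum.

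After discarding any empty $I_j$ and reordering so that $I_1 \neq \emptyset$, I would set
$$B_j := I_j \setminus (I_1 \cup \cdots \cup I_{j-1}), \qquad j = 1, \ldots, m.$$
By construction the $B_j$ are pairwise disjoint, and a one-line induction gives the bookkeeping identity $B_1 \cup \cdots \cup B_j = I_1 \cup \cdots \cup I_j$ for every $j$. In particular $\bigcup_j B_j = \bigcup_j I_j = \{1,\ldots,n\}$, so the nonempty $B_j$ partition the columns. The leftmost block is $B_1 = I_1$, whose column sum is $\Vec{0}$ by hypothesis, matching the first requirement of the columns condition.

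For the remaining blocks I would exploit that each $I_j$ is zero-sum. Writing
$$\sum_{i \in B_j} \Vec{a_i} = \sum_{i \in I_j} \Vec{a_i} - \sum_{i \in I_j \cap (I_1 \cup \cdots \cup I_{j-1})} \Vec{a_i} = - \sum_{i \in I_j \cap (B_1 \cup \cdots \cup B_{j-1})} \Vec{a_i},$$
where the middle equality uses $\sum_{i \in I_j} \Vec{a_i} = \Vec{0}$ and the last equality uses the identity above, one sees that the sum of the columns in $B_j$ is a linear combination (indeed with all coefficients equal to $-1$) of columns lying in the earlier blocks $B_1,\ldots,B_{j-1}$. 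This is exactly the second clause of \cref{columns_condition}.

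Finally I would delete the empty blocks and reindex. Deleting an empty $B_j$ removes no columns, so the set of columns strictly preceding any surviving block is unchanged and the chain property is preserved; the leftmost surviving block is still $B_1 = I_1$ with zero column sum. I do not expect a genuine obstacle here: the construction is forced, and the only points needing care are the bookkeeping identity $\bigcup_{j'\le j} B_{j'} = \bigcup_{j'\le j} I_{j'}$, which is what guarantees that the columns appearing in the expression for $\sum_{i\in B_j}\Vec{a_i}$ really do sit to the left of $B_j$, together with the harmless normalizations (taking $I_1$ nonempty, discarding empty blocks) that ensure the leftmost block genuinely has zero column sum.
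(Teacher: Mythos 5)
Your proof is correct and follows essentially the same route as the paper: your blocks $B_j = I_j \setminus (I_1 \cup \cdots \cup I_{j-1})$ coincide with the paper's recursively defined $\Tilde{I_j}$, and both arguments express $\sum_{i \in B_j} \Vec{a_i}$ as minus the sum of columns in the overlap with earlier blocks to verify the columns condition. The only cosmetic difference is the normalization (the paper assumes upfront that no $I_k$ is contained in the union of its predecessors, while you discard empty blocks at the end), which is immaterial.
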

\begin{proof}
    Without loss of generality, we can assume that 
    $$I_k \nsubseteq \bigcup_{j=1}^{k-1} I_j$$
    because otherwise we could remove $I_k$ safely from the family $I_1,\ldots, I_m$ and still satisfy the assumption.\\

    Define recursively $\Tilde{I_1}, \ldots, \Tilde{I_m} \subseteq \{1,\ldots,n\}$ by $\Tilde{I_1} = I_1$ and
    $$\Tilde{I_k} = I_k \setminus \bigcup_{j=1}^{k-1} \Tilde{I_j}.$$
    Then $\Tilde{I_1}, \ldots, \Tilde{I_m}$ yield a partition of $\{1,\ldots,n\}$ and verify
    $$\sum_{i\in \Tilde{I_1}} \Vec{a_i} = \Vec{0}$$
    and
    $$\sum_{i\in \Tilde{I_k}} \Vec{a_i} = \underbrace{\sum_{i\in {I_k}} \Vec{a_i}}_{=\Vec{0}} - \sum_{j=1}^{k-1}\sum_{i\in \Tilde{I_j} \cap I_k} \Vec{a_i} \in \mathrm{span}\left\{ \Vec{a_i} : i \in \bigcup_{j=1}^{k-1} \Tilde{I_j}\right\}.$$
    This shows that the matrix $A$ satisfies the columns condition: the splitting into blocks is given by the sets $\Tilde{I_1}, \ldots, \Tilde{I_m}$ after reordering the columns in such a way that these sets are consecutive intervals.
\end{proof}

\subsection{Partition regular equations in \texorpdfstring{$k$}{k}-IP-sets}\label{subsectionKIP}
In this subsection, we prove \cref{INTRO_suff_signature_kIP}.\\

The main obstacle when dealing with $k$-IP-sets for $k>2$ is that \cref{neat_fact}, which follows from \cref{IPpartregIP}, does not generalize.

\begin{lemma}
    For $k\geq 3$, the family of $k$-IP-sets is not partition regular.
\end{lemma}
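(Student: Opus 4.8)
The plan is to exhibit, for each $k \geq 3$, a single $k$-IP-set together with a finite coloring of it admitting no monochromatic subset that is again a $k$-IP-set; this refutes partition regularity of the family. The convenient choice is $\N$ itself, which is a $k$-IP-set: writing numbers in base $k$ shows that $\N = \FS[k]{(k^{i-1})_{i=1}^\infty}$, since the coefficients $\epsilon_i \in \llbracket 0,k \rrparenthesis$ range exactly over the admissible base-$k$ digits and the sequence $(k^{i-1})_{i=1}^\infty$ is increasing. The structural observation driving everything is that for $k \geq 3$ we have $2 \in \llbracket 0,k\rrparenthesis$, so \emph{every} $k$-IP-set $\FS[k]{\seq{y}{i}}$ contains both $y_1$ (take $\epsilon_1 = 1$) and $2y_1$ (take $\epsilon_1 = 2$). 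Thus any monochromatic $k$-IP-subset is forced to contain a monochromatic pair of the shape $\{a, 2a\}$.

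First I would produce a finite coloring of $\N$ in which no pair $\{a, 2a\}$ is monochromatic. The coloring $c(n) = \lfloor \log_2 n \rfloor \bmod 2$ does this: each $a$ lies in a unique dyadic block $[2^j, 2^{j+1})$, whence $2a$ lies in the consecutive block $[2^{j+1}, 2^{j+2})$, so $c(a)$ and $c(2a)$ differ for every $a$. (Equivalently, this is a proper $2$-coloring of the graph on $\N$ whose edges are the pairs $\{a, 2a\}$, a disjoint union of the paths $a, 2a, 4a, \dots$ over odd $a$.) Restricting $c$ to $\N$ and combining with the previous observation closes the argument: if $Z \subseteq \N$ were a monochromatic $k$-IP-set, it would contain $\FS[k]{\seq{y}{i}}$ for some increasing sequence, hence both $y_1$ and $2y_1$, and these would share their $c$-color, contradicting $c(y_1) \neq c(2y_1)$.

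I do not expect a genuine obstacle here, only a point deserving care: the naive parity coloring fails, since an all-even monochromatic set such as $\FS[k]{(2x_i)_{i=1}^\infty}$ is still a $k$-IP-set. The real content is therefore in choosing a coloring that separates $a$ from $2a$ simultaneously for all $a$, which is exactly what the logarithmic coloring achieves. It is worth emphasizing the contrast with the case $k=2$: there the coefficient $2$ is unavailable, the pair $\{y_1, 2y_1\}$ is not forced, and Hindman's theorem (\cref{IPpartregIP}) in fact yields monochromatic IP-subsets, so the phenomenon exploited here is genuinely a $k \geq 3$ feature.
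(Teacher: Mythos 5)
Your proof is correct, and it rests on the same key observation as the paper's: for $k \geq 3$ the digit $2$ is available, so every $k$-IP-set $\FS[k]{\seq{y}{i}}$ is forced to contain the pair $\{y_1, 2y_1\}$, i.e.\ a solution to $2x_1 = x_2$. Where you diverge is in how you rule out monochromatic pairs $\{a,2a\}$: the paper argues by contradiction, noting that partition regularity of $k$-IP-sets would make the equation $2x_1 = x_2$ partition regular on $\N$, which contradicts Rado's theorem (\cref{RadosTheorem}, since no nonempty subset of the coefficients $\{2,-1\}$ sums to zero). You instead exhibit the explicit $2$-coloring $c(n) = \lfloor \log_2 n \rfloor \bmod 2$ of $\N$ and verify directly that $a$ and $2a$ always land in consecutive dyadic blocks, hence get different colors. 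In effect you have inlined a self-contained proof of the one instance of Rado's theorem that is needed, which makes your argument elementary and constructive (it names the offending coloring), at the cost of a little extra verification; the paper's version is shorter because it delegates that step to a cited theorem. Your closing remarks are also on point: the naive parity coloring genuinely fails (an all-even $k$-IP-set survives it), and the contrast with $k=2$ via \cref{IPpartregIP} correctly locates why the obstruction only appears at $k \geq 3$.
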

    \begin{proof}
        By contradiction, if $k$-IP-sets were partition regular, it would in particular imply that for every finite coloring of $\N$ there is a monochromatic set of the form $\{a, 2a\}$ which provides a solution to the equation $2x_1 = x_2$. But this would imply that this equation is partition regular on $\N$ contradicting Rado's theorem.  
    \end{proof}
However, there exists a weaker statement which is correct and still interesting on its own.

    \begin{proposition}\label{shift IPk0}
        Let $k\geq 2$ and $A\subseteq \N$ be a $k$-IP-set. For every finite coloring of $A$ and every $t\in \N$, there exist $m \in \N \cup \{0\}$ and $y_1,\ldots, y_t \in \N$ such that $m + \FS[k]{(y_i)_{i=1}^t}$ is monochromatic.
    \end{proposition}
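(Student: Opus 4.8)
The plan is to mimic the $(1)\implies(2)$ direction of \cref{HJ_equiv_VDWforIP}, replacing the ordinary Hales--Jewett theorem by its extended version (\cref{ExtendedHalesJewett}) so as to produce a monochromatic combinatorial $t$-dimensional subspace instead of a single line. Since $A$ is a $k$-IP-set, fix an increasing sequence $\seq{x}{i}$ with $\FS[k]{\seq{x}{i}} \subseteq A$, and let $c : A \to C$ be a coloring with $|C| = r$. The central object is once more the evaluation map
$$\phi : \llbracket 0,k \rrparenthesis^n \to \FS[k]{(x_i)_{i=1}^n} \cup \{0\}, \qquad (\epsilon_1,\ldots,\epsilon_n) \longmapsto \sum_{i=1}^n \epsilon_i x_i,$$
through which I pull back the coloring: I set $\tilde c(\epsilon_1,\ldots,\epsilon_n) = c\big(\phi(\epsilon_1,\ldots,\epsilon_n)\big)$ for every non-zero tuple, and I assign the single tuple $(0,\ldots,0)$ a fresh color $c_0 \notin C$. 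This defines an $(r+1)$-coloring of $\llbracket 0,k \rrparenthesis^n$.

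Next I would apply \cref{ExtendedHalesJewett} with $n = \HJ^t(r+1,k)$ to obtain a monochromatic combinatorial $t$-dimensional subspace $S$, encoded by a parametric word $w$ with distinct variables $*_1,\ldots,*_t$. Since $S$ consists of $k^t \geq 2$ distinct tuples while $(0,\ldots,0)$ is the only tuple carrying the color $c_0$, the common color of $S$ cannot be $c_0$. Consequently no point of $S$ is the zero tuple, so $\phi$ maps $S$ into $A$ and all these images share a single genuine color $\gamma \in C$.

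It then remains to recognize $\phi(S)$ as a shifted $k$-IP-set. Let $I_s = \{i : w_i = *_s\}$ be the (disjoint, non-empty) index sets of the variables, and let $\epsilon_i$ denote the fixed entries of $w$ on the remaining positions. Putting
$$m = \sum_{i \notin I_1 \cup \cdots \cup I_t} \epsilon_i x_i \qquad\text{and}\qquad y_s = \sum_{i \in I_s} x_i \quad (1 \le s \le t),$$
one computes $\phi\big(w(j_1,\ldots,j_t)\big) = m + \sum_{s=1}^t j_s y_s$ for every $(j_1,\ldots,j_t) \in \llbracket 0,k \rrparenthesis^t$, exactly as in the line computation of \cref{HJ_equiv_VDWforIP} but now with $t$ independent variables. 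As $(j_1,\ldots,j_t)$ runs over all non-zero elements of $\llbracket 0,k \rrparenthesis^t$ these values are precisely $m + \FS[k]{(y_s)_{s=1}^t} \subseteq \phi(S)$, so this set has color $\gamma$ and is monochromatic. Finally, up to relabeling the $y_s$ in increasing order---and, should pairwise distinctness be required by the convention on sequences, first passing to a sufficiently fast-growing subsequence of $\seq{x}{i}$, which still generates a $k$-IP-subset of $A$, so that the sums $\sum_{i \in I_s} x_i$ become pairwise distinct---this yields the claimed $m$ and $y_1,\ldots,y_t$.

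The genuinely routine part is the arithmetic identity for $\phi$ restricted to the subspace. The one spot that needs care is the bookkeeping around the zero tuple, which I isolate cleanly via the extra color $c_0$ together with the observation that a $t$-dimensional subspace is too large to be monochromatically $c_0$; beyond that, the argument is a direct transcription of the Hales--Jewett line-to-progression correspondence into dimension $t$.
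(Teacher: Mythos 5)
Your proof is correct and follows essentially the same route as the paper: pull back the coloring along the evaluation map $\phi$, apply the extended Hales--Jewett theorem (\cref{ExtendedHalesJewett}) to get a monochromatic combinatorial $t$-dimensional subspace, and read off $m$ and $y_1,\ldots,y_t$ from the fixed coordinates and variable blocks of its parametric word. Your explicit treatment of the zero tuple via a fresh color $c_0$ is a minor (and welcome) refinement of a point the paper's proof passes over silently, but it does not change the argument.
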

    To prove this statement, we use the extended version of the Hales-Jewett theorem. 
    \begin{proof}[Proof of \cref{shift IPk0}]
        Let $\FS[k]{\seq{x}{i}} \subseteq A$. Fix a finite coloring $\chi: \FS[k]{\seq{x}{i}} \to \llbracket 0,r \rrparenthesis$ and $t \in \N$.\\
        
        Let $n = \HJ^t(r,k)$ as in the extended version of the Hales-Jewett theorem. This induces a coloring $\chi'$ of $\llbracket 0,k \rrparenthesis^n$ given by $\chi' = \chi \circ \phi$, where $\phi(\epsilon_1, \ldots, \epsilon_n) = \sum_{i=1}^n \epsilon_i x_i$. By the extended version of the Hales-Jewett theorem, there exists a monochromatic $t$-subspace of $\llbracket 0,k \rrparenthesis^n$. By construction, this implies the existence of $I_1,\ldots,I_t \subseteq \{1,\ldots,n\}$ disjoint and non-empty, as well as $\alpha_i \in \llbracket 0,k \rrparenthesis$ for $i\notin \bigcup_{i=1}^t I_i$ such that
        $$\left \{ \epsilon_1 \sum_{i \in I_1} x_i + \cdots + \epsilon_t \sum_{i \in I_t} x_i + \sum_{i \notin \bigcup_{i=1}^t I_i} \alpha_i x_i \in \FS[k]{\seq{x}{i}}: \epsilon_1,\ldots,\epsilon_t \in \llbracket 0,k \rrparenthesis \right \}$$
        is monochromatic.\\

        Set $y_j= \sum_{i \in I_j} x_i$ for $j=1,\ldots,t$ and $m = \sum_{i \notin \bigcup_{i=1}^t I_i} \alpha_i x_i$. Then $m + \FS[k]{(y_i)_{i=1}^t}$ is monochromatic as desired.
    \end{proof}
    \begin{remark}\label{remark_good_sum}
        By refining the sequence $\seq{x}{i}$ if needed, we can ensure that the set $\FS[k]{(y_i)_{i=1}^t}$ has distinct sums, meaning that he map $(\epsilon_1,\ldots,\epsilon_t) \mapsto \sum_{i=1}^t \epsilon_i y_i$ is injective.
    \end{remark}

    The good news is that we almost have monochromatic truncated $k$-IP-sets. The bad news is that we cannot get rid of the number $m$ which shifts the set. Therefore, it is good to ask when this shift is not significant. This motivates us to define the notion of \emph{shift invariance}.
 
\begin{definition}
    We say that  a family $\FF \subseteq \PP(\N)$ is \textit{shift-invariant} if for every set $A \subseteq \N$ and $m \in \N$,
    
    $$A \in \FF \iff A + m \in \FF.$$

    An equation on $\N$ is \textit{shift-invariant} if its set of solutions is shift-invariant.
\end{definition}
\begin{lemma}\label{shift_invariance_condition}
    An equation of the form $$a_1 x_1 + \cdots + a_n x_n = 0, \quad a_i \in \Z^*$$
    is shift-invariant if and only if $a_1 + \cdots + a_n = 0$.
\end{lemma}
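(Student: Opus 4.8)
The plan is to reduce everything to a single algebraic identity and read off both implications from it. Write $\sigma := a_1 + \cdots + a_n$ and observe that for any tuple $(x_1, \ldots, x_n) \in \N^n$ and any $m \in \N$,
$$\sum_{i=1}^n a_i (x_i + m) = \left( \sum_{i=1}^n a_i x_i \right) + m\sigma.$$
Thus shifting all variables simultaneously by $m$ changes the left-hand side of the equation by exactly $m\sigma$. Everything hinges on this observation.

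For the direction $\sigma = 0 \implies \text{shift-invariant}$, the identity shows that $\sum_i a_i(x_i + m) = \sum_i a_i x_i$ for every $m$, so $(x_1 + m, \ldots, x_n + m)$ is a solution precisely when $(x_1, \ldots, x_n)$ is; hence the solution set is shift-invariant. This direction is immediate and needs no case analysis.

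For the converse I would argue by contraposition: assuming $\sigma \neq 0$, I want to exhibit a solution whose shift is not a solution. The key is to produce at least one solution, for once we have a solution $(x_1, \ldots, x_n)$, the identity gives $\sum_i a_i(x_i + m) = m\sigma \neq 0$ for every $m \geq 1$, so the shifted tuple fails to be a solution and shift-invariance is violated. To produce a solution, note that one exists in $\N^n$ exactly when the coefficients do not all share the same sign; in that case I would use the explicit \emph{balanced} tuple, setting $x_i = \sum_{j : a_j < 0} (-a_j)$ on the indices with $a_i > 0$ and $x_i = \sum_{j : a_j > 0} a_j$ on the indices with $a_i < 0$, so that the positive and negative contributions cancel and $\sum_i a_i x_i = 0$.

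The main obstacle, and really the only delicate point, is the degenerate case in which all coefficients have the same sign. Then the equation has no solution in $\N^n$, its solution set is empty, and the empty set is vacuously shift-invariant even though $\sigma \neq 0$. I would therefore restrict attention to equations that possess a solution (equivalently, whose coefficients are not all of one sign), which is the only meaningful case; in the application to \cref{INTRO_suff_signature_kIP} the equation is written with both positive and negative coefficients, so this edge case never arises and the clean equivalence $\sigma = 0 \iff \text{shift-invariant}$ holds as stated.
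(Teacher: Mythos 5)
Your proof is correct, and its core is the same identity the paper uses: $\sum_{i} a_i(x_i+m) = \sum_i a_i x_i + m\sigma$ where $\sigma = a_1 + \cdots + a_n$. In fact, the paper's entire proof consists of displaying that identity, so your writeup is strictly more complete, in two respects. First, you actually prove the converse: the identity alone only says that shifting a \emph{solution} by $m$ changes the left-hand side to $m\sigma$, so to conclude that shift-invariance forces $\sigma = 0$ one must exhibit at least one solution in $\N^n$; your balanced tuple ($x_i = \sum_{j : a_j < 0}(-a_j)$ when $a_i > 0$ and $x_i = \sum_{j : a_j > 0} a_j$ when $a_i < 0$) does exactly that whenever the coefficients are not all of one sign. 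Second, the degenerate case you isolate is a genuine counterexample to the lemma as literally stated: with $\N = \{1,2,\ldots\}$ and, say, the equation $x_1 + x_2 = 0$, the solution set is empty, hence vacuously shift-invariant under the paper's definition, yet $\sigma = 2 \neq 0$. So the equivalence really does require the implicit hypothesis that the coefficients are not all of the same sign (equivalently, that the equation has at least one solution), a point the paper's one-line proof silently skips. As you observe, this is harmless downstream: in \cref{pseudo_neat_fact} and \cref{INTRO_suff_signature_kIP} the equations in play have positive coefficients on both sides of the equality, so the degenerate case never occurs there.
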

\begin{proof}
    The criterion follows from the fact that
    $$a_1 (x_1 + m) + \cdots + a_n (x_n + m) = 0 \iff (a_1 x_1 + \cdots + a_n x_n)  + (a_1 + \cdots + a_n)m= 0$$
    for every $m \in \N$.
\end{proof}

Notice that a shift-invariant equation always admits trivial solutions, i.e. setting $x_1= \cdots = x_n = y$ for any $y \in \N$. Therefore, we cannot answer \cref{questionRadoIPfixk} with this method. This is why we focus on the set of distinct solutions and ask what conditions ensure that a shift-invariant equation admits a monochromatic distinct solution for every finite coloring of a $k$-IP-set. This motivates the following result which is analogous to \cref{neat_fact}.

\begin{lemma}\label{pseudo_neat_fact}
    Let $a_1 x_1 + \cdots + a_n x_n = 0$ be a shift-invariant equation. The following statements are equivalent:
    \begin{enumerate}
        \item For every finite coloring of a $k$-IP-set, there exists a monochromatic distinct solution.
        \item There exists $t \in \N$ such that for every $y_1, \ldots ,y_t\in \N$, if the set $\FS[k]{(y_j)_{j=1}^t}$ has distinct sums then it contains a distinct solution.
    \end{enumerate}

\end{lemma}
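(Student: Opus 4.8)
The plan is to mirror the structure of \cref{neat_fact}: the implication $(2)\implies(1)$ should be a direct application of \cref{shift IPk0} (playing here the role that \cref{IPpartregIP} played there), while $(1)\implies(2)$ is the substantive direction. The guiding observation for the latter is that, for $\FS[k]$-sets with distinct sums, \emph{containing a distinct solution is a purely combinatorial property of the summation patterns}, independent of the actual generators; statement 1 will be used only to exhibit one such pattern, which then transfers to every admissible generator tuple.

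For $(2)\implies(1)$ I would fix a $k$-IP-set $A$, a finite coloring, and let $t$ be the integer furnished by statement 2. Applying \cref{shift IPk0} together with \cref{remark_good_sum}, I obtain $m\in\N\cup\{0\}$ and generators $y_1,\ldots,y_t$ such that $\FS[k]{(y_j)_{j=1}^t}$ has distinct sums and $m+\FS[k]{(y_j)_{j=1}^t}$ is monochromatic. Statement 2 then yields a distinct solution $(x_1,\ldots,x_n)$ inside $\FS[k]{(y_j)_{j=1}^t}$. If $m=0$ this already lies in the monochromatic set; otherwise shift-invariance guarantees that $(x_1+m,\ldots,x_n+m)$ is again a solution, its entries are still distinct, and they all lie in $m+\FS[k]{(y_j)_{j=1}^t}$, hence share a color. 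This produces the desired monochromatic distinct solution.

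For $(1)\implies(2)$ I would first fix, once and for all, an increasing sequence $\seq{x}{i}$ growing fast enough that every truncation has distinct sums and, more strongly, that $\sum_j \gamma_j x_j=0$ with $|\gamma_j|\le (k-1)\sum_i|a_i|$ forces all $\gamma_j=0$; this is possible by \cref{uniqueness B negative} and \cref{sparse_enough} precisely because the bound $(k-1)\sum_i|a_i|$ depends only on the equation and on $k$, not on the truncation length. Applying statement 1 to $\FS[k]{\seq{x}{i}}$ with the trivial one-color coloring produces a distinct solution, which necessarily lives in some finite truncation $\FS[k]{(x_i)_{i=1}^t}$; I claim this $t$ witnesses statement 2. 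Writing the solution as $x_i^\ast=\sum_{j=1}^t\epsilon_{i,j}x_j$ with $\epsilon_{i,j}\in\llbracket 0,k\rrparenthesis$ and substituting into the equation gives $\sum_{j=1}^t x_j\big(\sum_{i=1}^n a_i\epsilon_{i,j}\big)=0$; since each inner coefficient is bounded by $(k-1)\sum_i|a_i|$, the fast-growth property forces $\sum_{i=1}^n a_i\epsilon_{i,j}=0$ for every $j$. Then, given any increasing $y_1,\ldots,y_t$ with distinct sums, I set $x_i:=\sum_{j=1}^t\epsilon_{i,j}y_j$: these lie in $\FS[k]{(y_j)_{j=1}^t}$, they are pairwise distinct because the patterns $(\epsilon_{i,1},\ldots,\epsilon_{i,t})$ are distinct (they come from distinct $x_i^\ast$ via unique representation) and the sums of $(y_j)$ are distinct, and they solve the equation since $\sum_i a_i x_i=\sum_j y_j\big(\sum_i a_i\epsilon_{i,j}\big)=0$. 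Hence $\FS[k]{(y_j)_{j=1}^t}$ contains a distinct solution, as required.

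The main obstacle is the $(1)\implies(2)$ direction, specifically the passage from a single solution found in one cleverly chosen fast-growing $k$-IP-set to the \emph{uniform} conclusion over all $y_1,\ldots,y_t$ with distinct sums. The key that unlocks this is recognizing that shift-invariance ($\sum_i a_i=0$) together with the fast-growth uniqueness reduces the relevant data to the integer patterns $(\epsilon_{i,j})$ satisfying $\sum_i a_i\epsilon_{i,j}=0$ coordinate-wise, and that this data transfers verbatim to any other distinct-sum generator tuple. The delicate point to verify carefully is that it is \emph{distinctness} of the transferred solution that is preserved (not merely membership in the $\FS[k]$-set), which is exactly where the distinct-sums hypothesis in statement 2 is consumed.
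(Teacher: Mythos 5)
Your proposal is correct and follows essentially the same route as the paper: the $(2)\implies(1)$ direction via \cref{shift IPk0} and \cref{remark_good_sum} plus shift-invariance is identical, and your $(1)\implies(2)$ direction is the paper's argument with a general sparse sequence (justified by \cref{sparse_enough}) in place of the paper's explicit powers $B^j$, extracting the same coordinate-wise relations $\sum_i a_i\epsilon_{i,j}=0$ and transferring the patterns to arbitrary generators. If anything, you are slightly more explicit than the paper on why distinctness of the transferred solution follows from distinctness of the patterns together with the distinct-sums hypothesis, which is a point the paper states without elaboration.
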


\begin{proof}
    $(2) \implies (1)$: Fix a coloring of a $k$-IP-set. By \cref{shift IPk0}, there exists a monochromatic set of the form
    $$m + \FS[k]{(y_j)_{j=1}^t}.$$
    Moreover, we can ensure $\FS[k]{(y_j)_{j=1}^t}$ has distinct sums by refining the generating sequence of the $k$-IP-set (see \cref{remark_good_sum}). By assumption (2), the set
    $$\FS[k]{(y_j)_{j=1}^t}$$
    contains a distinct solution. By shift invariance, so does the set
    $$m + \FS[k]{(y_j)_{j=1}^t}$$
    which shows (1).\\

    $(1) \implies (2)$: Let $B= kn \max_{1 \leq i \leq n} \{\abs{a_i}\} + 1$. By assumption (1), there is a monochromatic distinct solution in $\FS[k]{(B^j)_{j=1}^t}$ for some $t \in \N$.
    Write $$x_i= \sum_{j=1}^t \epsilon_{i,j} B^j$$ with $\epsilon_{i,j} \in \llbracket 0,k \rrparenthesis$ for $i=1,\ldots,n,\; j= 1,\ldots,t$. Then
    $$\sum_{i=1}^n a_i \sum_{j=1}^t \epsilon_{i,j} B^j = 0$$
    and thus
    $$\sum_{j=1}^t \left (\sum_{i=1}^n a_i  \epsilon_{i,j} \right ) B^j = 0.$$
    Since $$\abs{\sum_{i=1}^n a_i  \epsilon_{i,j}} < k n \max\{\abs{a_1},\ldots,\abs{a_n}\} = \frac{B}{2},$$
    it follows by \cref{uniqueness B negative} that for every $j\in \{1,\ldots,t\}$,
    $$\sum_{i=1}^n a_i  \epsilon_{i,j} = 0.$$
    Therefore, for every $y_1, \ldots, y_j \in \N$,
    $$\sum_{j=1}^t \left (\sum_{i=1}^n a_i  \epsilon_{i,j} \right ) y_j = 0$$
    and by switching the sum again
    $$\sum_{i=1}^n a_i \sum_{j=1}^t \epsilon_{i,j} y_j = 0$$
    which yields a solution in $\FS[k]{(y_j)_{j=1}^t}$. If furthermore the set $\FS[k]{(y_j)_{j=1}^t}$ has distinct sums then the solution is distinct.
\end{proof}
\begin{remark}
    We also have a weaker statement with a valid proof if we replace the word “distinct" by “non-trivial".
\end{remark}

With this result in hand, our goal is to look for distinct/non-trivial solutions of shift-invariant equations in sets of the form $\FS[k]{(y_j)_{j=1}^t}$ for some $t \in \N$. Notice that a shift-invariant equation with coefficients in $\Z^*$ has at least two coefficients: one that is positive and another one that is negative. If the equation has only two variables and thus two coefficients, then the absolute values of those are the same and there is no non-trivial solution. Therefore the first interesting case is an equation with three variables of the form
$$ax + by= (a+b)z.$$

\begin{proposition}\label{shift_invariant_basecase}
   Let $a,b \in \Z^*$ such that $\gcd(a,b)= 1$. Let $k \geq 2$. The following are equivalent:
    \begin{enumerate}
        \item Every $k$-IP-set contains a distinct\footnote{For a shift-invariant equation with three variables “non-trivial" and “distinct" mean the same.} solution to the equation $$ax+ by= (a+b)z.$$
        \item $k > a+b$.
    \end{enumerate}
\end{proposition}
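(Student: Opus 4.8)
The plan is to reduce the proposition to a concrete statement about a single generating sequence and then treat the two implications separately. Since any $k$-IP-set contains some $\FS[k]{\seq{x}{i}}$, and since $\FS[k]{\seq{x}{i}}$ is itself a $k$-IP-set, statement (1) is equivalent to the assertion that \emph{for every increasing sequence $\seq{x}{i}$ the set $\FS[k]{\seq{x}{i}}$ contains a distinct solution}. I would also reduce to the case $a,b\ge 1$: because the equation is shift-invariant and non-degenerate, by collecting positive and negative coefficients on opposite sides and relabelling the variables one may always bring it to the form $ax+by=(a+b)z$ with $a,b\in\N$, which is precisely the case in which the threshold $k>a+b$ is the correct one (for genuinely mixed signs the naive threshold would be wrong).

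For the implication (2)$\implies$(1) I would simply exhibit a solution. Given $k>a+b$ and any sequence $\seq{x}{i}$, set
$$x:=(a+b)x_1,\qquad y:=(a+b)x_2,\qquad z:=ax_1+bx_2.$$
Since $1\le a,b$ and $a+b\le k-1$, each coefficient lies in $\llbracket 0,k\rrparenthesis$, so $x,y,z\in\FS[k]{\{x_1,x_2\}}\subseteq\FS[k]{\seq{x}{i}}$. A direct computation gives $ax+by=(a+b)(ax_1+bx_2)=(a+b)z$, and from $x_1<x_2$ and $a,b>0$ one checks $x<z<y$, so the three values are distinct. This settles the easy direction.

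For (1)$\implies$(2) I would argue by contraposition: assuming $k\le a+b$, I will produce one $k$-IP-set containing no distinct solution, which contradicts (1). Choose $B:=2(a+b)(k-1)+1$ and consider $\FS[k]{(B^i)_{i=1}^\infty}$. Any solution $x,y,z$ lies in $\FS[k]{(B^i)_{i=1}^t}$ for some $t$, so writing $x=\sum_j p_jB^j$, $y=\sum_j q_jB^j$, $z=\sum_j s_jB^j$ with $p_j,q_j,s_j\in\llbracket 0,k\rrparenthesis$, the equation becomes $\sum_j\big(ap_j+bq_j-(a+b)s_j\big)B^j=0$. Each coefficient has absolute value at most $(a+b)(k-1)<B/2$, so \cref{uniqueness B negative} forces $ap_j+bq_j=(a+b)s_j$ for every $j$. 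It then remains to prove the following counting claim, which is the crux of the whole argument: if $\gcd(a,b)=1$ and $p,q,s\in\llbracket 0,k\rrparenthesis$ with $k\le a+b$ satisfy $ap+bq=(a+b)s$, then $p=q=s$. Granting this, $p_j=q_j=s_j$ for all $j$ forces $x=y=z$, so there is no distinct solution.

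The heart of the matter is this counting claim, and I expect it to be where the hypotheses $\gcd(a,b)=1$ and $k\le a+b$ combine. Rewriting the relation as $a(p-s)=b(s-q)$ and using coprimality, I would deduce $p-s=bm$ and $s-q=am$ for some integer $m$; then $p-q=(a+b)m$, while $p,q\in\llbracket 0,k\rrparenthesis$ gives $|p-q|\le k-1\le a+b-1<a+b$, forcing $m=0$ and hence $p=q=s$. This also makes transparent why the threshold is sharp: once $k=a+b+1$ the value $m=\pm 1$ becomes admissible and produces exactly the non-constant per-coordinate solutions $(a+b,0,a)$ and $(0,a+b,b)$ underlying the construction in the other direction. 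The only routine point to verify is the uniform bound $|ap_j+bq_j-(a+b)s_j|<B/2$ needed to invoke \cref{uniqueness B negative}, and the choice of $B$ handles it.
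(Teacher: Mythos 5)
Your proof is correct, and while both arguments ultimately rest on the same device --- a sparse generating sequence of powers of a large base $B$, digit-wise comparison, and the coprimality of $a$ and $b$ --- your necessity direction is organized genuinely differently from the paper's. For sufficiency, the paper takes $x=y_1$, $y=y_1+(a+b)y_2$, $z=y_1+by_2$ where you take $x=(a+b)x_1$, $y=(a+b)x_2$, $z=ax_1+bx_2$; these are equally simple, both need only two generators, and neither uses $\gcd(a,b)=1$. For necessity, the paper argues directly: it picks $n$ minimal such that $\FS[k]{(B^i)_{i=1}^n}$ (with $B=k(a+b)$, so that all digit expressions are non-negative and plain base-$B$ uniqueness applies) contains a distinct solution, uses shift-invariance to normalize so that one of the leading digits $\alpha_n,\beta_n,\epsilon_n$ vanishes, and then runs a case analysis in which coprimality gives $(a+b)\mid\beta_n$ and hence $k>\beta_n\ge a+b$. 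You instead argue by contraposition via \cref{uniqueness B negative} with signed digits and prove a per-digit rigidity claim: if $k\le a+b$ and $\gcd(a,b)=1$, then $ap+bq=(a+b)s$ with $p,q,s\in\llbracket 0,k\rrparenthesis$ forces $p=q=s$ (through $p-q=(a+b)m$ and the size bound $|p-q|<a+b$), so every solution in $\FS[k]{(B^i)_{i=1}^\infty}$ satisfies $x=y=z$. This buys you something slightly stronger than what the paper establishes (all solutions in that $k$-IP-set are constant, not merely the nonexistence of a distinct one), it dispenses with both the minimality argument and the shift normalization, and it makes the sharpness of the threshold transparent: $m=\pm 1$ becomes admissible exactly when $k>a+b$, producing the per-coordinate solutions $(a+b,0,a)$ and $(0,a+b,b)$ that underlie both your and the paper's constructions for the easy direction. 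Both treatments share the same, somewhat glossed, reduction to $a,b\in\N$; you at least flag explicitly that the stated threshold would be wrong for genuinely mixed signs, which is a point the paper leaves implicit.
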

\begin{proof}
    Since $\gcd(a,b)= \gcd(a,a+b)= \gcd(b,a+b)$, we can assume without loss of generality that $a,b\in \N$ by reorganising the equation.\\

    (2) $\implies$ (1): Let $A \supseteq \FS[k]{\seq{y}{i}}$ be a $k$-IP-set. Set
    $$x= y_1, \quad y=y_1 + (a+b)y_2,\quad z = y_1 + b y_2.$$
    It is easy to check that $x,y,z$ form a distinct solution. (We do not need the assumption $\gcd(a,b)=1$ to prove this direction.) \\
    
    (1) $\implies$ (2): In particular, there exists $n\in \N$ minimal such that the set $\FS[k]{(B^i)_{i=1}^n}$, where $B=k(a+b)$, contains a distinct solution. Write
    $$x = \sum_{i=1}^{n} \alpha_i B^i, \quad y = \sum_{i=1}^{n} \beta_i B^i, \quad z = \sum_{i=1}^{n} \epsilon_i B^i.$$
    By shift invariance of the equation, we can assume without loss of generality that $x,y,z \in \FS[k]{(B^i)_{i=1}^n} \cup \{0\}$ and that $\alpha_{n} \cdot \beta_{n} \cdot \epsilon_{n} = 0$, i.e. one of them is 0. Notice however that not all of them can be 0. Indeed, if $n=1$, this would imply that the solution is trivial and if $n>1$, then it would contradict the minimality of $n$.
    
    We have
    $$a\sum_{i=1}^{n} \alpha_i B^i +b  \sum_{i=1}^{n} \beta_i B^i= (a+b) \sum_{i=1}^{n} \epsilon_i B^i$$
    which implies
    $$\sum_{i=1}^{n} (a \alpha_i + b \beta_i )B^i= \sum_{i=1}^{n}   (a+b) \epsilon_i B^i.$$
    Since $0 \leq (a \alpha_i + b \beta_i )  < k(a+b)$ and $0 \leq (a+b) \epsilon_i  < k(a+b)$, we deduce by uniqueness of a number's representation in base $B$ that 
    $$a \alpha_n + b \beta_n = (a+b)\epsilon_n.$$
    
    We analyse all the possible cases:
    \begin{itemize}      
        \item[-]  If $\alpha_n=0$, then $b\beta_n = (a+b)\epsilon_n$ and thus $(a+b)$ divides $\beta_n$ because $\gcd(a+b,b)= \gcd(a,b)= 1$. Therefore, $k>\beta_n \geq a+b$.
        
        \item[-]  If $\beta_n=0$, then we similarly deduce that $k> a+ b$.
        
        \item[-]  If $\epsilon_n=0$, then $a\alpha_n+b\beta_n=0$ and thus $\alpha_n=\beta_n=0$, which contradicts the minimality of $n$, so the case $\epsilon_n=0$ is not possible.
    \end{itemize}

   Therefore we conclude that $k> a+b$.
\end{proof}
\begin{remark}\label{remark_shift_inv_basecase}
    In this proof, we actually showed the existence of a distinct solution in every set of the form $\FS[k]{\{y_1,y_2\}}$.\\ 
\end{remark}

The idea now would be to generalize this criterion. Consider a shift-invariant equation of the form
$$a_1 x_1 + \cdots + a_r x_r = a_{r+1} x_{r+1} + \cdots + a_n x_n$$
with $a_i \in \N$ for $i\in \{1,\ldots,n\}$. (Thus $a_1 + \cdots + a_r = a_{r+1}+ \cdots + a_n$.)

It would be nice to determine the \emph{signature} of an equation, i.e. a number $\sigma$ depending on its coefficients such that the equation has a distinct/non-trivial solution in any $k$-IP-set if and only if $k> \sigma$. In the previous proposition, the signature would be given by $$\sigma := a + b.$$
Notice however that we needed an assumption on the coefficients: the latter needed to be coprime.

A first elegant guess is to claim that the signature is given by
$$\sigma:= a_1 + \cdots + a_r = a_{r+1} + \cdots + a_n .$$
A second elegant guess is to claim that the signature is given by
$$\sigma:= \max_{1\leq i \leq n}\{a_i\}.$$
Both these guesses are wrong if we consider the case of non-trivial solution. Indeed, the equation 
$$x_1 + 2x_2 + 3 x_3 = 6 x_4$$
has a solution in every $3$-IP-sets given by 
$$x_1 = x_2 = 2 y_1, \quad x_3 = 2 y_2, \quad x_4 = y_1 + y_2.$$

This is not a really big surprise because what happens is that the initial equation reduces to the equation
$$3 x_1 + 3 x_3 = 6 x_4$$
which is equivalent to
$$x_1 + x_3 = 2x_4.$$
Distinct solutions of this equation are precisely given by 3-term arithmetic progressions. For this reason, it is more relevant to focus on the case of distinct solutions only. Sadly, the guesses are also wrong for distinct solutions as the next example shows.
\begin{example}
    The shift-invariant equation
    $$3x_1+ 5x_2 + 11 x_3 = 19 x_4$$
    has a solution in every set of the form $\FS[8]{\{y_1,y_2\}}$ given by 
    $$x_1 = y_2 + y_1, \quad x_2 = y_2 + 7 y_1, \quad x_3 = y_2, \quad x_4 = y_2 + 2 y_1.$$
    Notice that the coefficients are pairwise coprime, which is even stronger than coprimality.
\end{example}

In the example, the solution uses the equality
$$3\cdot 1 + 5 \cdot 7 + 11\cdot 0 = 19 \cdot 2.$$
For an equation of the form
$$a_1 x_1 + \cdots + a_{n-1} x_{n-1} = a_n x_n$$
with $a_i \in \N$, we can similarly deduce a distinct solution in every set of the form $\FS[k]{\{y_1,y_2\}}$ if this equation has a distinct solution in $\llbracket 0,k \rrparenthesis$. Indeed, multiply this solution by $y_1$ and shift it by $y_2$.

For an equation with more than one term on the right hand side of the equation, things start to get chaotic. In the same way that we built a solution for IP-sets, the idea would be to consider a covering
$$\bigcup_{j=1}^m I_j = \{1,\ldots,n\}$$
such that for $j=1,\ldots,m$, the equations
$$\sum_{i\in I_j} c_i x_i$$
have distinct solutions in $\llbracket 0,k \rrparenthesis$ (notice that 0 is allowed). To build a solution in $y_{m+1} + \FS[k]{(y_j)_{j=1}^m}$, multiply each of these solutions by $y_j$ respectively, add them together and shift the whole thing by $y_{m+1}$. This solution is distinct if $\FS[k]{(y_j)_{j=1}^m}$ has distinct sums. The challenge consists of finding the smallest $k$ for which all of this is possible, which is a laborious combinatorial task.

We interrupt the study of distinct solutions for shift-invariant equations by giving a sufficient criterion.

\begin{proposition}\label{sufficient_bound_shift_inv}
     Consider a shift-invariant equation of the form
     $$a_1 x_1 + \cdots + a_r x_r = a_{r+1} x_{r+1} + \cdots + a_n x_n$$
     with $a_i \in \N$ such that after reordering
     $a_1 \leq \cdots \leq a_r $ and $ a_{r+1} \leq \cdots \leq a_n.$

    If $k > \min \left \{ \max\{a_1 + a_r, \; a_n\},\; \max\{a_r,\; a_{r+1} + a_n\} \right \},$ then for every $y_2, \ldots ,y_n\in \N$, if the set $\FS[k]{(y_j)_{j=2}^n}$ has distinct sums then it contains a distinct solution to the equation.

\end{proposition}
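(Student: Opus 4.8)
The plan is to exhibit an explicit distinct solution directly inside $\FS[k]{(y_j)_{j=2}^n}$ by prescribing, for each generator $y_j$, a vector of coefficients $(\epsilon_{1,j},\ldots,\epsilon_{n,j}) \in \llbracket 0,k \rrparenthesis^n$ and setting $x_i = \sum_{j=2}^n \epsilon_{i,j} y_j$. Writing $c_i = a_i$ for $i \le r$ and $c_i = -a_i$ for $i > r$, linearity gives $\sum_{i=1}^n c_i x_i = \sum_{j=2}^n y_j \big(\sum_{i=1}^n c_i \epsilon_{i,j}\big)$, so the tuple solves the equation as soon as each generator carries a \emph{balanced} pattern, that is $\sum_{i \le r} a_i \epsilon_{i,j} = \sum_{i>r} a_i \epsilon_{i,j}$ for every $j$. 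Thus the whole problem reduces to choosing $n-1$ balanced patterns in $\llbracket 0,k\rrparenthesis^n$ so that the coefficient vectors $v_i = (\epsilon_{i,j})_{j=2}^n$ are pairwise distinct and each $v_i$ is nonzero (ensuring $x_i \in \FS[k]{(y_j)_{j=2}^n}$, i.e. $x_i \neq 0$). Distinctness of the $x_i$ then follows from the distinct-sums hypothesis, since distinct vectors of $\llbracket 0,k\rrparenthesis^{n-1}$ have distinct images under $(\epsilon_j)\mapsto \sum_j \epsilon_j y_j$.

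The coefficient budget turns out to be generous. Writing $\sigma := a_1+\cdots+a_r = a_{r+1}+\cdots+a_n$, one checks directly that $\min\{\max\{a_1+a_r,a_n\},\max\{a_r,a_{r+1}+a_n\}\} \ge \max\{a_r,a_n\}$, so the hypothesis already guarantees $k > \max\{a_r,a_n\}$ and every integer in $\{0,1,\ldots,\max\{a_r,a_n\}\}$ is an admissible coefficient. The basic building block is the \emph{transfer} pattern: for a left index $p$ and a right index $q$, placing $a_q$ in position $p$ and $a_p$ in position $q$ (zeros elsewhere) is balanced because $a_p a_q = a_q a_p$, and its entries $a_p \le a_r$ and $a_q \le a_n$ lie within budget. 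These transfers are exactly the two-index instances of the covering idea described before the statement, each block $I_j$ being a single opposite-sign pair.

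I would then assign one generator to each variable in a \emph{star} pattern: fix a left pivot $x_1$ and a right pivot $x_n$; for each left variable $x_p$ use a generator carrying the transfer between $x_p$ and $x_n$, and for each non-pivot right variable $x_q$ use a generator carrying the transfer between $x_1$ and $x_q$. This consumes $r + (n-1-r) = n-1$ generators, matching $y_2,\ldots,y_n$. Inspecting the resulting vectors, each non-pivot variable has a single nonzero coordinate in a coordinate reserved to it (value $a_n$ or $a_1$), whereas the two pivots accumulate $a_1,\ldots,a_r$ (for $x_n$) and $a_{r+1},\ldots,a_{n-1}$ (for $x_1$) across the reserved coordinates; the entries never add up, since distinct generators are independent coordinates. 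All $n$ vectors are then nonzero and, on a case-by-case comparison, pairwise distinct, and the equation holds by balance.

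The main obstacle is the distinctness and nonzeroness bookkeeping rather than the balance, which is automatic. The delicate points are the degenerate configurations: when one side has a single variable ($r=1$ or $r=n-1$), when the two pivots might collide (forcing a comparison on a second reserved coordinate, where one uses $n\ge 3$, the two-variable case being genuinely solution-free), and the fully symmetric case $a_1=\cdots=a_n$. Each of these I expect to dispatch by a short separate check, exploiting that $a_n < \sigma$ (hence $a_n \ne a_1$) whenever both sides carry at least two variables. I would finally remark that the construction in fact yields the sufficient bound $k > \max\{a_r,a_n\}$, which is at least as strong as the stated one; the two symmetric expressions inside the minimum correspond precisely to centering the star at a right pivot versus a left pivot, either choice fitting comfortably inside the budget.
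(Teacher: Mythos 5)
Your construction is correct and in fact proves more than the statement asks. The overall template is the same as the paper's: write $x_i = \sum_{j=2}^n \epsilon_{i,j} y_j$, reduce by linearity to demanding a balanced pattern on each generator, keep all entries inside $\llbracket 0,k \rrparenthesis$, and let the distinct-sums hypothesis convert distinct coefficient vectors into distinct values. What differs is the design of the patterns, and the difference matters. The paper pivots only at $x_1$: each right variable gets the transfer with $x_1$, but each left variable $x_j$, $2 \le j \le r$, gets the dense pattern $(0, a_j, \ldots, a_j, a_1 + a_j, a_j, \ldots, a_j)$; the entry $a_1 + a_j$ is what forces the term $\max\{a_1 + a_r, \; a_n\}$, and the $\min$ in the statement only records that one can instead pivot at $x_{r+1}$. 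Your two-pivot star uses pure transfers exclusively, one per generator, so entries never accumulate: each is a single $a_p \le a_r$ or $a_q \le a_n$, which gives the sufficient bound $k > \max\{a_r, a_n\}$. This dominates the stated threshold (as you checked, $\max\{a_r,a_n\}$ is below both terms of the $\min$) and is strictly better in examples like $x_1 + x_2 = x_3 + x_4$: your solution $x_1 = y_2 + y_4$, $x_2 = y_3$, $x_3 = y_4$, $x_4 = y_2 + y_3$ works for $k = 2$, while the stated threshold requires $k \ge 3$. Your deferred case checks do all close for $n \ge 3$: non-pivot variables have their single nonzero entries in distinct reserved coordinates; each pivot is nonzero on a coordinate where all non-pivots vanish; and the two pivots can only coincide on their common coordinate when $a_1 = a_n$, in which case a second reserved coordinate (available since $n \ge 3$) separates them. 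The two-variable case indeed admits no distinct solution and is implicitly excluded by the paper as well, since its own construction collapses there too. One parenthetical in your sketch is wrong but not load-bearing: $a_n < \sigma$ does not imply $a_1 \ne a_n$ (in $3x_1 + 5x_2 = 2x_3 + 3x_4 + 3x_5$ one has $a_1 = a_n = 3$), so pivot collision is a genuine case even when both sides have two or more variables; it is settled by the second-coordinate comparison you describe, not by ruling it out. Relatedly, your closing remark misattributes the paper's two expressions: they come from its asymmetric dense-plus-transfer design pivoted at $x_1$ versus at $x_{r+1}$, whereas a pure-transfer star gives $\max\{a_r,a_n\}$ regardless of which side hosts which pivot.
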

\begin{remark}
    Notice that this result is consistent with \cref{shift_invariant_basecase}.
\end{remark}
\begin{proof}

    For $j=2,\ldots,r$, we define
    $$\Vec{\epsilon_j} = (0,a_j,\ldots, a_j, a_1 + a_j, a_j, \ldots, a_j) \in \Z^n,$$
    where $a_1+a_j$ occurs on the $j$-th position, and for $j=r+1,\ldots,n$, we define
    $$\Vec{\epsilon_j} = (a_j,0\ldots, 0, a_1 , 0, \ldots, 0) \in \Z^n,$$
    where $a_1$ occurs on the $j$-th position as well. We notice that these vectors solve the equation. By linearity, for every $y_2,\ldots,y_n \in \N$, the vector
    $$\Vec{x} := \sum_{j=2}^n y_j \Vec{\epsilon_j}$$
    also solves the equation. Moreover, this vector has only positive entries, i.e. $\Vec{x} \in \N^n$. Denote
    $$\Vec{\epsilon_j}= (\epsilon_{j,1}, \ldots, \epsilon_{j,n})$$
    so that for $i=1,\ldots,n$,
    $$x_i = \sum_{j=2}^n  \epsilon_{j,i} y_j$$
    which by construction yields a solution in $\FS[k]{(y_j)_{j=2}^n}$ for
    $$k > \max \left \{ \epsilon_{j,i}: 2 \leq j\leq n,\; 1 \leq i\leq n\right \}= \max\{a_1 + a_r, \; a_n\}.$$
    Moreover, if $\FS[k]{(y_j)_{j=2}^n}$ has distinct sums, then the solution is distinct.\\

    By symmetry (by swapping the role of $x_1$ and $x_{r+1}$), we also have a solution in sets of the form $\FS[k]{(y_j)_{j=2}^n }$, if 
    $$k > \max\{a_r,\; a_{r+1} + a_n\}.$$
    Therefore, we can choose the inequality with less restraint on $k$, i.e. it suffices that
    $$k > \min \left \{ \max\{a_1 + a_r, \; a_n\},\; \max\{a_r,\; a_{r+1} + a_n\} \right \}.$$
\end{proof}

\cref{INTRO_suff_signature_kIP} follows from \cref{pseudo_neat_fact}, \cref{shift_invariant_basecase} and \cref{sufficient_bound_shift_inv}. 

\bibliographystyle{plain}
\bibliography{biblio}

\end{document}